\newcommand{\epf}{{{\hfill $\Box$ \smallskip}}}
\newcommand{\stepdensity}{p}
\newcommand{\E}{\mathbb{E}}
\newcommand{\R}{\mathbb{R}}
\newcommand{\Z}{\mathbb{Z}}
\newcommand{\Q}{\mathbb{Q}}
\newcommand{\N}{\mathbb{N}}
\newcommand{\1}{\mathds{1}}
\newcommand{\Prb}{\mathbb{P}}
\newcommand{\dx}[1]{\text{d}{#1}}
\newcommand{\mushear}{\tilde{\mu}}
\newcommand{\Zshear}{\tilde{Z}}
\newcommand{\stepm}{\nu}
\newcommand{\refm}{\lambda}
\newcommand{\refpm}{\mathcal{M}}
\newtheorem{proposition}{Proposition}[section]
\newtheorem{lemma}{Lemma}[section]
\newtheorem{theorem}{Theorem}[section]
\theoremstyle{definition}
\renewcommand*{\theassumption}{\Alph{assumption}}
\newtheorem{remark}{Remark}
\newlist{assumpRequirements}{enumerate}{10}
\setlist[assumpRequirements]{label*=\Roman*}
\setlist[assumpRequirements,1]{label=(\Roman*), ref = \theassumption.\Roman*}
\crefname{assumpRequirementsi}{assumption}{assumptions}
\newlist{requirements}{enumerate}{10}
\setlist[requirements]{label*=\alph*}
\setlist[requirements,1]{label=\rm(\alph*), ref = \rm(\alph*)}
\crefname{requirementsi}{requirement}{requirements}
\numberwithin{equation}{section}
\author{Yuri Bakhtin and Douglas Dow}
\title[Differentiability of the Shape Function in Continuous Space]{Differentiability of the Shape Function for Directed Polymers in Continuous Space}
\begin{document}

\begin{abstract}
For directed polymers, the shape function computes the limiting average energy accrued by paths with a given average slope.
We prove that, for a large family of directed polymer models in discrete time and continuous space in dimension $1+1$,
for positive and zero temperature, the shape function is differentiable with respect to the slope on the entire real line. 
\end{abstract}

\address{Courant Institute of Mathematical Sciences, New York University}

\maketitle

\section{Introduction}

\subsection{The main result.}
The main goal of this paper is to prove the following result (see \Cref{thm:differentiabilityZeroTemp,thm:differentiabilityPolymer} for the precise statements):

\begin{theorem}
\label{th:main_high_level}
For a large family of directed polymer models in $1+1$ dimension
in continuous space and discrete time,
for positive and zero temperature, the shape function is differentiable on the entire real line.
\end{theorem}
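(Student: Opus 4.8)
The plan is to reduce everywhere-differentiability of $\shapef$ to the \emph{strict convexity of a dual free energy}, and then to rule out linear segments of that dual using the continuous-space structure of the step law. First I would record that $\shapef$ is concave in the slope: at positive temperature this follows from superadditivity of the point-to-point free energy under path concatenation together with a Kingman/Fekete-type limit, and at zero temperature from the analogous superadditivity of the last-passage value, with finiteness and the needed moment control supplied by the growth bound $\Fmax$. Since a concave function on $\R$ is automatically differentiable off an at most countable set, the entire content of the theorem is the \emph{exclusion of corners}. I would make this precise by convex duality: introduce the sheared (exponentially tilted) free energy
\[
\freeEnergy(h) = \lim_{n\to\infty}\frac{1}{n}\log \Zshear_n(h),
\]
where $\Zshear_n(h)$ is the partition function of the model whose reference step measure $\stepm$ has been reweighted by $e^{hx}$ (so that $\freeEnergy$ is the free energy read off the sheared measure $\mushear$). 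One checks that $\freeEnergy$ is convex and that $\shapef$ and $\freeEnergy$ form a Legendre pair,
\[
\shapef(s) = \inf_{h\in\R}\bigl(\freeEnergy(h) - hs\bigr).
\]
By the standard correspondence between corners of a concave function and affine pieces of its conjugate, $\shapef$ is differentiable at every $s$ if and only if $\freeEnergy$ is strictly convex, and $\shapef'$ covers all of $\R$ precisely when $\freeEnergy'$ does.

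Next I would establish that $\freeEnergy$ is $C^1$ and identify $\freeEnergy'(h)$ with the limiting endpoint velocity $\lim_n X_n/n$ under the tilted measure $\mushear_h$. Here the continuous-space hypotheses do real work: under the paper's integrability assumptions on the step density $\stepdensity$ (finiteness of the relevant exponential moments, which is also what lets the attainable velocities range over all of $\R$), the tilt $e^{hx}$ keeps $\freeEnergy(h)$ finite and differentiable, and a dominated-convergence argument permits differentiating through the limit to obtain $\freeEnergy'(h) = \E_{\mushear_h}[\,\text{velocity}\,]$. This both gives finiteness of the dual on the whole line and sets up the derivative as a genuine physical quantity to be controlled in the next step.

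The heart of the argument is to prove that $h\mapsto\freeEnergy'(h)$ is \emph{strictly} increasing, i.e.\ that no interval of tilts produces the same velocity. I would pursue this along two complementary lines, either of which forces strict convexity: a monotone-coupling/stochastic-domination argument showing that raising the tilt by $\delta>0$ reweights the step density $\stepdensity(x)$ toward $\stepdensity(x)e^{\delta x}$, which—using that $\stepdensity$ charges a genuine interval—strictly stochastically advances each increment and hence the endpoint; or a direct fluctuation lower bound identifying $\freeEnergy''(h)$ with the limiting normalized variance $\lim_n \tfrac1n\Var_{\mushear_h}(X_n)$ and showing this is strictly positive. Because the nondegeneracy of $\stepdensity$ holds at every location, the strictness should propagate to all $h\in\R$, yielding strict convexity of $\freeEnergy$ on the whole line and therefore differentiability of $\shapef$ everywhere. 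For the zero-temperature shape function I would then transfer the conclusion by a $\beta\to\infty$ scaling limit of the positive-temperature statement, controlling the interchange of limits through the $\Fmax$ estimates and working with $\PrbZeroTemp$ and $\FZeroTemp$, or alternatively rerun the monotonicity argument directly on the last-passage value.

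I expect the decisive obstacle to be the strictness in the third step: showing that the strict monotonicity of the single-step exponential tilt survives the reweighting by the random environment and the long-range correlations it induces along the polymer, and that it does so uniformly enough to exclude corners at \emph{every} slope, including the extreme regimes where the velocity approaches the edge of the effective support of $\stepdensity$. The coupling must be made compatible with the disorder rather than with the step law alone, and the variance route requires a lower bound on endpoint fluctuations that is typically the most delicate estimate in such models.
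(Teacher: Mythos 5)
Your reduction via Legendre duality is sound as far as it goes, but it only restates the problem rather than solving it: differentiability of $\shapef$ everywhere is \emph{equivalent} to strict convexity of the tilted free energy $\freeEnergy$, so all of the content of the theorem is concentrated in your third step, and neither of the two routes you offer for it can be completed. The coupling route gives only non-strict monotonicity in the limit: at finite $n$ one does have $\partial_h \tfrac1n\E_{\mushear_h}[X_n]=\tfrac1n\Var_{\mushear_h}(X_n)>0$, but this strictness survives $n\to\infty$ only if one has a lower bound $\liminf_n \tfrac1n\Var_{\mushear_h}(X_n)>0$ holding uniformly over the relevant tilts, which is exactly the hard fluctuation lower bound your second route also needs. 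That variance route has two further problems: the identity $\freeEnergy''(h)=\lim_n\tfrac1n\Var_{\mushear_h}(X_n)$ requires differentiating twice through the $n\to\infty$ limit, which is not justified, and quenched endpoint-fluctuation lower bounds for polymers in random environments are a well-known open problem of KPZ type (the paper itself notes that strict and strong convexity of the shape function remain open; the statement you need is the dual of the same kind of curvature assertion, and no soft argument is known for either). A telling symptom is that your argument never uses anything that fails on the lattice: exponential tilting, the duality, and strict advance of a single tilted step under a non-degenerate step law all hold verbatim for lattice polymers, so a proof along your lines would settle the lattice differentiability conjecture, which remains open. Finally, your transfer to zero temperature via a $\beta\to\infty$ limit does not work either: pointwise limits of differentiable convex (or concave) functions can have corners, so differentiability does not pass to the limit.

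The paper's mechanism is different and is where continuous space genuinely enters: the environment law is invariant under shear maps $(\Xi_v^*F)_k(x)=F_k(x+vk)$, which is possible only for a spatially homogeneous field on $\R$. Shearing converts the point-to-point problem at slope $v$ into a problem on the \emph{fixed} path space $\Gamma^n(0)$ with shifted self-interaction $V(\cdot+v)$, so that slopes $v$ and $w$ can be compared along the same path. A second-order Taylor expansion of $V$ then yields the bound $f_n(w)\le f_n(v)+(w-v)g_n+O\bigl((w-v)^2\bigr)$, where $g_n$ is the empirical average of $V'$ along the minimizer (or its polymer expectation), and the quadratic error is controlled by an \emph{upper} bound on empirical sums of $\sup_{|r|\le1}V''$, obtained from concavity of the shape function in auxiliary multiplicative parameters $(\alpha,\beta)$ scaling $V$ and $F$. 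A deterministic convex-analysis lemma converts this approximate linear domination into differentiability of the convex limit and the formula for its derivative, for positive and zero temperature in parallel. The crucial point you are missing is that this scheme needs only upper bounds (on second-derivative-type sums), never a lower bound on fluctuations, which is why it closes while your proposal reduces to an open problem.
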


The term {\it directed polymers} refers to a class of probabilistic models for chains of monomers interacting with a dynamic random environment and with their nearest 
neighbors in the chain. At zero temperature, the chains are arranged to minimize the total energy of those interactions, and at positive temperature the chains are drawn from a Gibbs distribution corresponding to the same energy. Most of the existing literature concerns lattice models, see, e.g., \cite{Comets:MR3444835}, \cite{Janjigian--Rassoul-Agha:MR4089495}, \cite{Bates-Chatterje:MR4089496},
\cite{JRAS-JEMS},
 and references therein. Continuous space models have also been studied and lately have been gaining popularity in connection with stochastic PDEs such as the KPZ equation and its relatives. Here is a sample of literature on continuous models in positive and zero temperature:
 \cite{Aldous-Diaconis:MR1355056}
\cite{Comets-Yoshida:MR2117626}, \cite{CaPi}, \cite{BCK:MR3110798}, \cite{Alberts-Khanin-Quastel:MR3162542},  
\cite{kickb:bakhtin2016},  \cite{BK18},  \cite{Bakhtin-Li:MR3911894}, \cite{bakhtin-seo2020localization}, \cite{Bakhtin-Dow:joint-loc},
\cite{JRAS:https://doi.org/10.48550/arxiv.2211.06779},
\cite{BSS-stationary-horizon}, \cite{Das-Zhu:https://doi.org/10.48550/arxiv.2203.03607}.

Differentiability and strong convexity of the shape function for generic
polymer models without exact solvability or symmetries
have been long-standing open problems. Our main theorem
gives a solution of the differentiability problem in the space-continuous case. Our proof exploits invariance in distribution of the underlying environment under shear transformations, a consequence of working in continuous space and of standard stationarity assumptions on the environment. We impose no shear invariance assumptions on the 
self-interaction energy, which means that we work in a very broad setup. Still, there is a number of similar continuous Last Passage Percolation (LPP) and First Passage Percolation (FPP) models that do not belong to our setting formally.  Our argument can easily be adjusted to treat these models as well, as we discuss below.

\subsection{Polymers and shape functions.}
\label{sec:intro-poly}
Let us introduce a fairly general setup based on the Gibbs formalism for random walks in random potentials
embracing several polymer models in discrete time in dimension  $1+1$.

The  i.i.d.\ steps of the random walk all have the same distribution given by a Borel probability 
measure $\stepm$ on~$\R$. This measure is usually assumed to be of the form $\stepm(dx)=p(x)\refm(dx)$, i.e.,
it has density $p\ge 0$ with respect to  the measure~$\refm$ which is either
the counting measure on $\Z$ or Lebesgue measure on $\R$. These choices of $\refm$ give rise, respectively, to lattice polymer models or
continuous space polymers. 
In this paper, we work only with the space continuous case but here we include the lattice case for the historical context.

In both cases, let us define  
\begin{equation}
\label{eq:V-from-p}
V(x)=\begin{cases}
-\log p(x),& p(x)>0,\\
+\infty, & p(x)=0,
\end{cases}
\end{equation}
which can be viewed as the energy of self-interaction between neighbors in the polymer chain. 

The external environment is given by a potential with random realization $F$ that is a function from $\Z\times \R$ to $\R$. 
A path passing through a point $x$ at time $k$ picks up energy $F_k(x)$ from that point.
The most interesting situation is where the random field $F$ is space-time stationary. It is natural to require that $(F_k)_{k\in\Z}$ is an i.i.d.\ sequence.

For a fixed realization of the potential $F$, the total energy or action of a path $\gamma:\{m,m+1,\ldots,n\}\to\R$ given by
\begin{equation}
    \label{eq:path_energy}
    A^{m,n}(\gamma) = \sum_{k=m}^{n-1}V(\Delta_k\gamma) + \sum_{k=m}^{n-1}F_k(\gamma_k)
\end{equation}
combines
the energies of self-interaction and interaction with the environment.  
Here
\[
\Delta_k\gamma=\gamma_{k+1}-\gamma_k.
\]
If~the denisty $p$ is Gaussian, i.e., $V$ is quadratic, then the first term in~\eqref{eq:path_energy} can be viewed as the kinetic action or the total
kinetic energy of a particle traveling along the path~$\gamma$. For general~$V$,  the total energy $A^{m,n}(\gamma)$ is the sum
of the generalized kinetic and potential energies. 

The point-to-point (p2p) directed polymer measure $\mu^{m,n}_{x,y}$ on paths
$\gamma$ connecting the point $x\in\R$ at time $m\in\Z$ and a point $y\in\R$  at time $n\in\Z$ is defined as the Gibbs distribution with reference measure 
\begin{equation}
\label{eq:ref-measure}
\refpm^{m,n}_{x,y}(d\gamma)=\delta_x(d\gamma_m)\refm(d\gamma_{m+1})\ldots \refm(d\gamma_{n-1})\delta_{y}(d\gamma_n)
\end{equation} 
($\delta_x$ means the Dirac mass concentrated at $x$)
and energy $A^{m,n}$, which is random since it  depends on the realization of $F$. More precisely,
\begin{align}
\label{eq:p2p_meas}
\mu^{m,n}_{x,y}(d\gamma)
=&\frac{1}{Z^{m,n}_{x,y}}e^{-A^{m,n}(\gamma)}  \refpm^{m,n}_{x,y}(d\gamma)
\\ \notag
=&\frac{1}{Z^{m,n}_{x,y}}e^{-\sum_{k=m}^{n-1}V(\Delta_k \gamma) - \sum_{k=m}^{n-1}F_k(\gamma_k)}
\refpm^{m,n}_{x,y}(d\gamma),
\end{align}
where the normalizing constant $Z^{m,n}_{x,y}$, called the {\it partition function}, is given by
\begin{align}
\label{eq:p2p_pf}
Z^{m,n}_{x,y}=&\int_{\R\times\R^{n-m-1}\times\R} e^{-A^{m,n}(\gamma)}  \refpm^{m,n}_{x,y}(d\gamma)
\\
\notag
=&
\int_{\R\times\R^{n-m-1}\times\R}e^{-\sum_{k=m}^{n-1}V(\Delta_k \gamma) - \sum_{k=m}^{n-1}F_k(\gamma_k)}\refpm^{m,n}_{x,y}(d\gamma).
\end{align}

Of course,  the definition \eqref{eq:p2p_meas} is valid only if
$0<Z^{m,n}_{x,y}<\infty$.

All properties of polymer measures can be expressed in terms of this family of partition functions. In particular,
a basic object in the study of polymers of growing and infinite length is the {\it free energy density} or {\it shape function} defined by
\begin{equation}
\label{eq:shape_positive_temp}
\Lambda_1(v)  = -\lim_{n\to\infty} \frac{1}{n}\log Z^{0,n}_{0,vn}.
\end{equation}
A subadditivity argument implies that under fairly broad conditions on $V$ and $F$ in the space-continuous setting (in particular $V$ is required to be finite everywhere), 
there is a deterministic convex $\R$-valued function~$\Lambda_1$ such that  
for all $v\in\R$,
\eqref{eq:shape_positive_temp}
holds
almost surely. In the lattice case, or if~$V$ takes infinite values, 
a version of this still holds true, although one needs to replace the sequence of points $vn$ on the right-hand side of~\eqref{eq:shape_positive_temp}  by a
sequence of points $x_n$ in the support of the endpoint distribution of the random walk satisfying $x_n/n\to v$, and one may need to allow for infinite values of~$\Lambda_1(v)$.

The definitions~\eqref{eq:p2p_meas} and \eqref{eq:p2p_pf} implicitly use the temperature value $T=1$. Any other positive value of temperature can be used and treated in the same way. In the limit $T\downarrow 0$, the finite volume Gibbs measures
concentrate near paths realizing the minimum value
\begin{equation}\notag
    A^{m,n}_{x,y}=\inf_{\substack{\gamma_m=x\\ \gamma_n=y}} A^{m,n}(\gamma).
\end{equation}
These minimal actions and the minimizers, also known as geodesics, have also been studied in the literature under the title
of last passage percolation (LPP). Similarly to the positive temperature case,  the {\it shape function} can be defined for $T=0$ by
\begin{equation}
\label{eq:shape_for_0_temp}
\Lambda_0(v)  = \lim_{n\to\infty} \frac{1}{n} A^{0,n}_{0,vn}.
\end{equation}
In fact, similarly to the positive temperature case, a subadditivity
argument implies that 
under fairly general conditions on $V$ and $F$, there is a deterministic convex function $\Lambda_0$ such 
that for all $v\in\R$, \eqref{eq:shape_for_0_temp} holds almost surely, with necessary modifications for lattice models and energy functions $V$ that are allowed to take infinite values.

In both cases, shape functions govern the scaling limit of the model
of the Law of Large Numbers type.
This can be viewed as a homogenization statement. In this viewpoint, the asymptotic behavior of the model at large scales is described by a Hamilton--Jacobi--Bellman  (HJB) equation with effective Hamiltonian that is the Legendre dual to the effective Lagrangian given by the shape function, see
\cite{Kosygina-Varadhan:MR2400607}, \cite{BK18}, \cite{JRAS:https://doi.org/10.48550/arxiv.2211.06779}. 
In fact, the action  $A^{m,n}$ can be used to define a discrete time random Lax--Oleinik semigroup which can be viewed as the solution operator for a HJB
equation with random forcing in discrete time.
We note that the most studied setup for stochastic homogenization of HJB equations involves a static environment, not a constantly refreshing dynamic environment as in our case, where the potential~$F$ is i.i.d., or {\it white}, in time.
 
Besides convexity, little is known about the shape functions $\Lambda_0$ and $\Lambda_1$.  There are only a few situations where these functions are known precisely or up to a constant. These are situations with a useful symmetry group or even precise algebraic properties.  
In \cite{kickb:bakhtin2016}, \cite{Bakhtin-Li:MR3911894},  for the quadratic energy $V(x)=a+bx^2$ corresponding to Gaussian densities $p$, both positive and zero temperature shape functions are shown to also be quadratic if $(F_k)_{k\in\Z}$  form an i.i.d.\ sequence. This is a consequence of the shear invariance of the model under these assumptions. Namely, under the shear transformations applied to paths and the environment, the distribution of the environment is invariant, polymer measures are mapped into polymer measures, minimizers are mapped into minimizers and the partition functions and optimal actions undergo a simple algebraic transformation.  

Similar results have been obtained for continuous time versions of this model  with quadratic action and white-in-time $F$
allowing for shear invariance:
 in the zero temperature model for inviscid Burgers equation with Poissonian forcing
\cite{BCK:MR3110798}, where the shear invariance was exploited for  the first time in this context, and a positive temperature version in 
\cite{JRAS:https://doi.org/10.48550/arxiv.2211.06779} for the KPZ equation. Besides the shear invariance, the proof in~\cite{JRAS:https://doi.org/10.48550/arxiv.2211.06779} uses the Gaussian specifics of the model allowing to compute the exact constant term in the quadratic shape function.
More shear-invariant models can easily be introduced. For example, Brownian polymers in Poissonian environment of \cite{Comets-Yoshida:MR2117626} belong to this class, and one can consider various versions of this model or the model of \cite{BCK:MR3110798} with 
compound Poisson/L\'evy environment in continuous or discrete time.

In the exponential  LPP lattice model, also known as the exponential corner growth model, the random walk is a simple symmetric walk on integers and the potential is given by i.i.d.\ exponential random variables. 
 An explicit shape function for this zero temperature model is an immediate implication of the main result of~\cite{Rost:MR635270} on the TASEP Markov process. A similar picture holds for i.i.d.\ geometric weights. 

The (weighted) Hammersley process of optimal up-right paths through clouds of Poisson points also belong to the LPP family. The setting we describe here needs to be adjusted to include those models.  
An explicit form of the shape function is known for them (up to a constant  unknown for general weights) which is also
a result of existence of a group of transformations preserving the optimal action and the distribution of Poissonian environment, see
\cite{Hammersley:MR0405665}, \cite{Aldous-Diaconis:MR1355056}, \cite{CaPi}.

The shape function is explicitly computed for the exactly solvable log-gamma polymer in \cite{Seppalainen:MR2917766}.

The O'Connell--Yor directed polymers in Brownian environment also allow for an explicit shape function for zero temperature
\cite{Baryshnikov:MR1818248},\cite{Gravner-Tracy-Widom:MR1830441},\cite{Hambly-Martin-O'Connell:MR1935124}
and positive temperature \cite{Moriarty-O'Connell:MR2343849}. The analysis is based on exact computations exploiting connections with random matrix theory and queueing theory.

For Euclidean First Passage Percolation (FPP) models, the rotational symmetry of the shape function is inherited from
the rotational symmetry of the model, see~\cite{HoNe3}.

In all these cases with explicit formulas for shape functions, they are differentiable on their domain and strongly convex, 
i.e., they allow  for a uniform lower bound on the curvature. It is broadly believed that these properties hold for a very large class of models essentially coinciding with the KPZ universality class. However, the strong convexity conjecture remains open. 

\subsection{Differentiability}
Since $\Lambda_0$ and $\Lambda_1$ are convex, their differentiability may fail at most at countably many $v\in \R$.

In the present paper, in the space-continuous polymer setting, for both zero and positive values of temperature, we introduce a set of mild conditions on the energy~$V$ and space-time stationary random potential $F$ that allow us to prove our main result stating differentiability everywhere for $\Lambda_0$ and $\Lambda_1$, 
Theorem~\ref{th:main_high_level}
(see Section~\ref{sec:Setting_MainResults} for precise statements of the conditions and results). Our result also gives a formula for the derivative in terms of limiting statistics of the model.

Differentiability of a convex function automatically implies that the derivative is a continuous nondecreasing function, so, in fact, this means that $\Lambda_0,\Lambda_1\in C^1(\R)$.

No such result has been available for any other model except the short finite list above where the derivative of the shape function can be computed explicitly. In contrast, our theorem makes a universality claim and asserts differentiability for a
large class of models.

The only existing differentiability result that holds for a generic class of models that is known to us is still far less powerful than ours. The main theorem from \cite{AuffingerDamron_PercolationCone} describes a class of random i.i.d.\  potentials on $\Z^2$ such that the limit shapes for the associated FPP and LPP models have a flat edge and are differentiable at the endpoints of that edge.

Differentiability of the shape function has important consequences for large scale behavior of polymers and geodesics.
In \cite{BCK:MR3110798},\cite{kickb:bakhtin2016},\cite{Bakhtin-Li:MR3911894}, and related papers, it was strong convexity 
of the (explicitly known) shape function that  was used to prove
the almost sure existence and uniqueness of 
one-sided geodesics (action minimizers) and
one-sided Dobrushin--Lanford--Ruelle (DLR) measures, or infinite-volume polymer measures (IVPM's) with an arbitrary asymptotic slope (also see \cite{Bakhtin-HBChen:MR4421173} for the description of those IVPM's as invariant measures of a stochastic heat equation in random environment).
But for lattice polymers,
 \cite{Janjigian--Rassoul-Agha:MR4089495} and \cite{JRAS-JEMS} derive existence and uniqueness of a one-sided geodesic or 
 an IVPM either with a prescribed asymptotic slope $v$ (if $v$ does not belong to any flat edge of the shape function) or with partial asymptotic slopes within a given flat edge, from the differentiability of shape function
 (which is still only an assumption and a plausible conjecture for lattice models).

In \cite{DamonHanson2017}, it was shown that absence of bigeodesics
in FPP is a consequence of 
the shape function differentiability assumption.

In general, the locally quadratic behavior of the shape function combining differentiability and strong convexity is believed to put the
model into the KPZ universality class, although at this point only heuristic arguments are available for this claim, see~\cite{BK18}. Though we do not establish strong or strict convexity of the shape function, our formula for the derivative of shape function provides heuristic evidence in support of the conjecture of strict convexity of the shape function for strictly convex~$V$.

In this paper, we focus on differentiability. We will work on its consequences in our space-continuous setting in forthcoming publications.

\subsection{Our assumptions, generality of our arguments, and other models}

The setting we work in is quite broad but in order to simplify the exposition
and stress the main ideas we did not try to obtain the most general assumptions. We stress though that the space-continuous setting is important to us and we claim no new result for the lattice case.

 We assume that the random potential is white (i.i.d.)
in time and stationary in space. We do not require ergodicity with respect to spatial shifts. For the one-dimensional marginals $F_k(x)$, $(k,x)\in\Z\times\R$, we require
a tail condition at $+\infty$ and boundedness from below. The latter
assumption can be relaxed. It is introduced just to make the finiteness of the limit in the subadditive ergodic theorem obvious.

As for the nearest neighbor self-interaction energy $V$, we require it 
to be finite, $C^2$-smooth and impose some conditions on its growth at infinity. In particular, our assumptions hold for any polynomial of even order $2q$,  $q\in \N,$ 
\begin{equation}\label{eq:exampleKineticEnergies}
    V(x) = \sum_{k=0}^{2q} a_k x^k
\end{equation}
with $a_0,\dots, a_{2q}\in \R$ and positive coefficient $a_{2q}$. 

Notably, besides the $C^2$ requirement, our conditions on $V$ concern the behavior at infinity, and hence if $V$ satisfies our conditions so will $V+\eta$ for any $\eta\in C^2(\R)$ satisfying suitable decay conditions at infinity.

Under our assumptions, shear-invariance holds for 
the distribution of the environment but not for the action, its minimizers, and polymers.

Our proof exploits this shear-invariance. However, we must
tackle the fact that under shear transformations, $V$ does not transform as neatly as in the quadratic case. 

Our arguments apply to much broader situations where the environment is shear invariant, which essentially means it is white and stationary in time and space. In particular, they apply to a large family of HJB equations with Poissonian/L\'evy or Gaussian space-time white noise in discrete or continuous time. In the latter case, the energy of a path is given by an integral version of the summation in 
\eqref{eq:path_energy}. Our results also hold for a class of models based on 
point processes with long-range spatial dependence such as random lattices as long as these processes are white in time and satisfy a technical assumption
guaranteeing the linear growth of action is satisfied.

We believe that a version of our proof should work in higher dimensions and for generalized HJB-polymers introduced in~\cite{BK18}.

Our arguments also apply to anisotropic Euclidean FPP models generalizing those introduced in~\cite{HoNe3} or their version recently 
introduced in~\cite{Bakhtin-Wu:MR3949963}. The environment in these models is given by the Delaunay
triangulation with vertices at
Poissonian points sampled with constant Lebesgue intensity on the plane. For a path $x_0,x_1,\ldots, x_n$ on this graph,
the action/energy is given by
\[
E(x_0,x_1,\ldots, x_n)=\sum_{k=0}^{n-1}V(x_{k+1}-x_k),
\]  
for a smooth nearest neighbor interaction energy $V:\R^2\to\R$.
 For radially symmetric functions $V(x)=V(|x|)$, the shape function is also radially symmetric, i.e., the boundary of the limit shape is a Euclidean circle. 
For generic $V$,  smoothness of the limit shape can be obtained by our method with the help of the rotational invariance of the environment (replacing the
shear invariance employed in the present paper) and tracking how $V$ transforms under rotations.

Let us note that the white-in-time requirement on the potential is
essential in our setup not only because it leads to the shear-invariance of the
environment but also because 
general ergodic non-i.i.d.\ environments may lead to polygonal limit
shapes with corners. This has been known since~\cite{Haggstrom-Meester:MR1379157}, with some modern additions in the context of homogenization for PDEs being~\cite{Ziliotto:MR3684310} 
and~\cite{Bakhtin-Li:MR4547558}. Nevertheless, our argument is also applicable
to some non-i.i.d.\ cases including HJB equations in continuous time where the sheared environment can be
efficiently coupled to a distributional copy of the original environment.

In the next section, we introduce the setting in detail, give a rigorous statement of Theorem~\ref{th:main_high_level}, and explain how the rest of the paper is structured. 

\textbf{Acknowledgements.}  YB thanks NSF for partial support via grant DMS-1811444.  We are thankful to Aria Halavati and Keefer Rowan for stimulating discussion with regard to \Cref{secondDerivReqImprovement}.

\section{The setting and statements of main results}\label{sec:Setting_MainResults}

\subsection{Zero temperature setting and main results}
\label{sec:zeroTempModelSetUp}
 We work in the space-continuous setting throughout the paper. In particular, we assume that $\refm(dx)=dx$, i.e.,  
it is the Lebesgue measure on $\R$.

We will need the random potential $F$ to be a space-time stationary field bounded from below. 
So we will work on the probability space $(\Omega, \mathcal{F},\Prb)$, where $\Omega$ is the space of continuous functions $F:\Z\times \R\to [M_F,+\infty)$ 
endowed with local uniform topology and~$\mathcal{F}$ is the completion of the Borel $\sigma$-algebra with respect to $\Prb$.  Here $M_F\in\R$ is an arbitrary constant.

We treat the first argument of $F$ as time and the second one as space.
We write the time argument of $F$  as a subscript obtaining $F_k:\R\to [M_F,+\infty)$, $k\in\Z$. We introduce the space-time shifts $(\theta^{n,x})_{n\in \Z, x\in \R}$ acting on $\Omega$, defined by $\theta^{n,x}F_k(y) = F_{k+n}(y+x)$, and we assume that these space-time shifts preserve~$\Prb$ so that $F$ is space-time stationary. We assume the collection $(F_k)_{k\in \Z}$ to be independent.

We also require 
\begin{equation}\label{potentialIntegrabilityZeroTemp}
    \E\, F_0(0) < \infty.
\end{equation}
As a result, we work with a large class of random potentials: bounded from below finite mean stationary processes, i.i.d.\ in time. 

Now let us describe our requirements 
on the energy $V:\R\to\R$. We assume that it belongs to $C^2(\R)$ and satisfies
\begin{equation}\label{kineticEnergyGrowthAtInfinity}
    \lim_{|x|\to \infty} V(x) = +\infty
\end{equation}
and
\begin{equation}\label{secondDerivReq}
     \limsup_{|x|\to \infty}\frac{V''(x)}{V(x)} < \infty.
\end{equation}
The coercivity requirement~\eqref{kineticEnergyGrowthAtInfinity} ensures that
\begin{equation}\notag
M_V:=\min_{x\in\R} V(x)>-\infty,  
\end{equation} 
which implies that
there exists a minimizer for the zero temperature problem defined below in \eqref{eq:AF_def}. Assumption \eqref{secondDerivReq} restricts the growth and
oscillation of $V$ at infinity. In the positive temperature setting,  we impose additional conditions on $V$, see \Cref{directedPolymerModelSetUp}.
Note that we do not require $V$ to be convex or monotone.  Moreover, we do not require that $p(x)$ obtained from $V(x)$ via 
\begin{equation}
\label{eq:p-from-V}
p(x)=e^{-V(x)},\quad x\in\R,
\end{equation}
(see \eqref{eq:V-from-p}) is a probability density. Note that polynomial energies $V$ defined in \eqref{eq:exampleKineticEnergies} satisfy our requirements.

For a path $\gamma:\{0,\dots,n\}\to \R$, we use 
\eqref{eq:path_energy} to 
define $A^n(\gamma)=A^{0,n}(\gamma)$. 
We define the set of admissible paths of length~$n$ with slope $v$ to be
\begin{equation}
\label{eq:Gamma_nv}
\Gamma^n(v) := \{\gamma\in \R^{n+1}\,:\,\gamma_0 = 0,\,\gamma_{n} = vn\}
\end{equation}
and consider the minimization problem 
\begin{equation}\label{eq:AF_def}
    A^n_*(v) = \inf\Big\{A^n(\gamma)\,:\,\gamma\in \Gamma^n(v)\Big\}.
\end{equation}

The following result claims that the shape function is well-defined and convex
under our assumptions. Although the proof follows standard lines,  we prove this theorem in Section~\ref{shapeTheoremSection} for completeness.
\begin{theorem}\label{shapeTheorem}
    Under the above assumptions, there is a convex deterministic function $\Lambda_0:\R\to \R$ such that for every $v\in \R,$
    \begin{equation}\label{eq:shapeFunction}
        \Lambda_0(v) = \lim_{n\to \infty} \frac{1}{n}A^n_*(v)
    \end{equation}
    $\Prb$-almost surely.
\end{theorem}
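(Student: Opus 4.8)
The plan is to realize $A^n_*(v)$ as a time-indexed subadditive cocycle and invoke Kingman's subadditive ergodic theorem to get the limit \eqref{eq:shapeFunction}, and then to extract convexity in $v$ from a concatenation inequality read off at the level of expectations. First I fix $v\in\R$ and, for integers $m\le n$, set $a_{m,n}:=A^{m,n}_{vm,vn}$, the minimal action among slope-$v$ paths, so that $a_{0,n}=A^n_*(v)$. Concatenating a minimizer on $\{m,\dots,\ell\}$ with one on $\{\ell,\dots,n\}$ yields an admissible slope-$v$ path on $\{m,\dots,n\}$ whose action is exactly the sum of the two actions: the kinetic terms split cleanly across the junction, and the shared node $\gamma_\ell=v\ell$ contributes its potential $F_\ell$ only to the second piece, so there is no double counting. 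This gives the subadditivity $a_{m,n}\le a_{m,\ell}+a_{\ell,n}$. The space-time shift $\theta^{1,v}$ satisfies $a_{m+1,n+1}=a_{m,n}\circ\theta^{1,v}$ and preserves $\Prb$; and since $(F_k)_{k\in\Z}$ is independent in time, $\theta^{1,v}$ is mixing (two events each depending on finitely many time coordinates become independent after enough shifts, irrespective of the spatial translation), hence ergodic. This is the only place the i.i.d.-in-time hypothesis enters, and it is what will make the limit deterministic without any spatial ergodicity assumption.

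Next I verify the integrability hypotheses of Kingman's theorem. The one-step value is forced, $a_{0,1}=V(v)+F_0(0)$, so $\E a_{0,1}=V(v)+\E F_0(0)<\infty$ by \eqref{potentialIntegrabilityZeroTemp}. Because $V\ge M_V$ and $F\ge M_F$, every length-$n$ path has action at least $n(M_V+M_F)$, whence $\tfrac1n\E a_{0,n}\ge M_V+M_F>-\infty$. Kingman's theorem then gives a.s.\ and $L^1$ convergence of $\tfrac1n a_{0,n}$, and ergodicity of $\theta^{1,v}$ identifies the limit as the deterministic constant $\Lambda_0(v)=\lim_n\tfrac1n\E a_{0,n}=\inf_n\tfrac1n\E a_{0,n}$, the last equality by Fekete's lemma. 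Finiteness of $\Lambda_0(v)$ follows from the lower bound $M_V+M_F$ together with the upper bound from the straight path $\gamma_k=vk$, whose expected action is $n(V(v)+\E F_0(0))$, giving $\Lambda_0(v)\le V(v)+\E F_0(0)<\infty$.

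For convexity I fix $v_1,v_2\in\R$ and a rational weight $\lambda=p/q\in(0,1)$, and I restrict to lengths $n=Nq$. Splitting such a path into a slope-$v_1$ part on $\{0,\dots,Np\}$ and a slope-$v_2$ part on $\{Np,\dots,Nq\}$ carries $0$ to the junction $v_1Np$ and then to $v_1Np+v_2N(q-p)=vNq$, with $v=\lambda v_1+(1-\lambda)v_2$, so the concatenation is admissible for slope $v$. Taking expectations in the concatenation inequality and using space-time stationarity to identify the law of the second piece yields $\E A^{Nq}_*(v)\le\E A^{Np}_*(v_1)+\E A^{N(q-p)}_*(v_2)$; dividing by $Nq$ and letting $N\to\infty$ through the expectation formula for $\Lambda_0$ gives $\Lambda_0(v)\le\lambda\Lambda_0(v_1)+(1-\lambda)\Lambda_0(v_2)$. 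Convexity along all rational weights, together with the local upper bound $\Lambda_0(\cdot)\le V(\cdot)+\E F_0(0)$ (locally bounded since $V\in C^2$) and finiteness, upgrades to genuine convexity of $\Lambda_0$ on $\R$.

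The only delicate point is the convexity step. To make the concatenated path land exactly on slope $v$ I am forced to work along the arithmetic subsequence $n=Nq$ and then to pass from rational to arbitrary convex weights using local boundedness. I expect the cleanest route to run the convexity argument at the level of expectations rather than almost surely: since $\Lambda_0(v)=\lim_n\tfrac1n\E a_{0,n}$ holds along the full sequence, this avoids having to couple the almost-sure limits of the two independent pieces, which would otherwise require matching probability-one events across a moving family of spatial shifts. Everything else — subadditivity, the integrability bounds, and determinism via time-ergodicity — is routine.
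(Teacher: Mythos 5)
Your proposal is correct, and its first half — realizing $A^{m,n}_{vm,vn}$ as a subadditive cocycle over a measure-preserving time shift, checking $\E a_{0,1}<\infty$ via \eqref{potentialIntegrabilityZeroTemp} and the pathwise lower bound $n(M_V+M_F)$, and invoking Kingman's theorem plus ergodicity of the shift (mixing via independence in time) — is essentially the paper's argument verbatim. Where you genuinely diverge is convexity. The paper first proves \eqref{eq:def_convex} for rational $v_1,v_2,t$ by concatenating minimizers and passing to limits \emph{in probability} (almost-sure convergence does not transfer across the moving shifts attached to the junction point), and then must separately prove that $\Lambda_0$ is continuous, which costs an additional nontrivial argument: comparison with straight paths, a strong law of large numbers along a sparse subsequence, and a carefully chosen rational function $b(u)$ with $b(u)-1\ge\sqrt{|u-v|}$. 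You instead run the concatenation at the level of expectations, using the identification $\Lambda_0(v)=\lim_n\frac1n\E A^n_*(v)=\inf_n\frac1n\E A^n_*(v)$ furnished by Kingman's $L^1$ convergence and Fekete's lemma; this sidesteps both the in-probability bookkeeping and the coupling issue, and — because spatial stationarity holds under arbitrary real shifts — it gives the inequality for arbitrary real $v_1,v_2$ with rational weights. Your final upgrade, from rational-weight (in particular midpoint) convexity plus the local upper bound $\Lambda_0\le V+\E F_0(0)$ to genuine convexity, is the classical Bernstein--Doetsch theorem, which you should cite by name: midpoint convexity alone does not imply convexity without a regularity hypothesis such as local boundedness above, so that citation is doing real work in your argument. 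Your route is shorter and cleaner, entirely avoiding the paper's continuity lemma; the paper's route is more self-contained (no appeal to Bernstein--Doetsch) and produces an explicit continuity estimate $\Lambda_0(v)\ge b\Lambda_0(u)-C(b-1)$ along the way. One small point you leave implicit: to take expectations of $A^n_*(v)$ you need its measurability, which the paper secures through the continuity/measurable-selection argument of \Cref{lem:existenceOfMinimizer}; alternatively, it suffices to observe that the infimum defining $A^n_*(v)$ may be restricted to a countable dense set of paths since $A^n(\gamma)$ is continuous in $\gamma$.
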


The next theorem is a precise statement of our main result, Theorem~\ref{th:main_high_level},  on differentiability of the shape function
 in the zero temperature case. We prove it in Section~\ref{differentiabilityTheoremProofs}. We denote by $\gamma_A^n$ the minimizer to \eqref{eq:AF_def} given in \Cref{lem:existenceOfMinimizer} in \Cref{sec:shearedEnvironmentZeroTempIntro}.
\begin{theorem}\label{thm:differentiabilityZeroTemp}
 Under the same assumptions,   the function $\Lambda_0$ is differentiable everywhere. Furthermore, for every $v\in \R$,
    \begin{equation}\label{shapeFcnDerivative}
        \Lambda_0'(v) = \lim_{n\to \infty}\frac{1}{n}\sum_{k=0}^{n-1}V'(\Delta_k \gamma_A^n(v))
    \end{equation}
    $\Prb$-almost surely.
\end{theorem}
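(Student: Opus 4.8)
The plan is to use the shear invariance of the environment to trade differentiability of $\Lambda_0$ in the slope $v$ for differentiability, in a shift parameter $s$, of the shape function attached to the translated self-interaction energy $V(\cdot+s)$. For $s\in\R$ I set
\begin{equation}\notag
h_n(s)=\frac1n\inf_{\eta\in\Gamma^n(v)}\Big[\sum_{k=0}^{n-1}V(\Delta_k\eta+s)+\sum_{k=0}^{n-1}F_k(\eta_k)\Big],
\end{equation}
so that $h_n(0)=\tfrac1n A^n_*(v)$. Substituting $\gamma_k=\eta_k+sk$ in \eqref{eq:AF_def} shows that $\tfrac1n A^n_*(v+s)$ equals $h_n(s)$ after $F_k(\cdot)$ is replaced by $F_k(\cdot+sk)$; since spatial stationarity and temporal independence make $(F_k(\cdot+sk))_k$ equal in distribution to $(F_k(\cdot))_k$, one gets $\tfrac1n A^n_*(v+s)\overset{d}{=}h_n(s)$. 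As $V(\cdot+s)$ satisfies the same hypotheses as $V$, \Cref{shapeTheorem} applies to $h_n(s)$; comparing the two deterministic a.s.\ limits yields the shear identity $\lim_{n\to\infty}h_n(s)=\Lambda_0(v+s)$ $\Prb$-a.s. Writing $\psi(s)=\Lambda_0(v+s)$, which is convex, it then suffices to prove $\psi$ is differentiable at $0$, for then $\Lambda_0'(v)=\psi'(0)$. Let $D^{+}\psi(0)$ and $D^{-}\psi(0)$ denote the right and left derivatives, which exist by convexity and satisfy $D^{-}\psi(0)\le D^{+}\psi(0)$.

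Next I would extract the first-order content by testing the slope-$v$ minimizer $\gamma^0:=\gamma_A^n(v)$ in the shifted problem. Using $\gamma^0$ as an admissible (suboptimal) path for $h_n(s)$ and as the optimizer for $h_n(0)$, the environment contributions $\sum_k F_k(\gamma^0_k)$ cancel, and with $P_n:=\tfrac1n\sum_{k=0}^{n-1}V'(\Delta_k\gamma^0)$ the $C^2$ Taylor expansion $V(x+s)-V(x)=sV'(x)+\tfrac{s^2}{2}V''(\xi)$ gives
\begin{equation}\label{eq:key-upper}
h_n(s)-h_n(0)\le sP_n+\tfrac{s^2}{2}\cdot\tfrac1n\sum_{k=0}^{n-1}V''(\xi_k),
\end{equation}
where each $\xi_k$ lies within $|s|$ of $\Delta_k\gamma^0$. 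This single one-sided inequality is all I need.

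Differentiability then follows from a sandwich obtained by sending $n\to\infty$ in \eqref{eq:key-upper} for each fixed $s$ of either sign. Granting the uniform remainder bound $\tfrac1n\sum_k V''(\xi_k)\le C$ for all large $n$ and all $|s|\le s_0$, the left side of \eqref{eq:key-upper} converges to $\psi(s)-\psi(0)$, so taking $\liminf_n$ and using $\liminf_n(sP_n)=s\liminf_nP_n$ for $s>0$ and $s\limsup_nP_n$ for $s<0$ yields
\begin{equation}\notag
\psi(s)-\psi(0)\le s\liminf_{n}P_n+Cs^2\ (s>0),\qquad \psi(s)-\psi(0)\le s\limsup_{n}P_n+Cs^2\ (s<0).
\end{equation}
Dividing by $s$ and letting $s\to0$ from the appropriate side gives $D^{+}\psi(0)\le\liminf_nP_n$ and $D^{-}\psi(0)\ge\limsup_nP_n$. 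Combined with $D^{-}\psi(0)\le D^{+}\psi(0)$ and the trivial $\liminf_nP_n\le\limsup_nP_n$, this forces
\begin{equation}\notag
\limsup_{n}P_n\le D^{-}\psi(0)\le D^{+}\psi(0)\le\liminf_{n}P_n\le\limsup_{n}P_n,
\end{equation}
so all coincide. Hence $\psi$, and therefore $\Lambda_0$, is differentiable at $v$, the limit defining $P_n$ exists, and $\Lambda_0'(v)=\lim_n\tfrac1n\sum_{k=0}^{n-1}V'(\Delta_k\gamma_A^n(v))$, which is \eqref{shapeFcnDerivative}. I emphasize that convergence of the average momenta $P_n$ is not assumed but emerges from the sandwich.

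The hard part will be the uniform bound $\tfrac1n\sum_kV''(\xi_k)\le C$. Because $F_k\ge M_F$, the optimal action obeys $\tfrac1n\sum_kV(\Delta_k\gamma^0)\le \tfrac1n A^n_*(v)-M_F\to\Lambda_0(v)-M_F$, so the empirical average of $V$ along the minimizer's increments is bounded uniformly in $n$; assumption \eqref{secondDerivReq} upgrades this to a bound on $V''$ through an estimate of the form $V''\le C(V+1)$, and one must also verify, again via \eqref{secondDerivReq}, that $V(\xi_k)$ is comparable to $V(\Delta_k\gamma^0)$ uniformly for $|s|\le s_0$, so that displacing the argument by at most $|s|$ preserves the estimate. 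Only the one-sided $\limsup$ in \eqref{secondDerivReq} is used, matching the fact that \eqref{eq:key-upper} is an upper bound. The remaining item to make rigorous is the distributional shear invariance behind $\lim_n h_n(s)=\Lambda_0(v+s)$, which is exactly where the space-continuous setting is essential.
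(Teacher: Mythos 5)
Your proposal follows essentially the same route as the paper's proof: use shear invariance to recast the slope perturbation as a shift in the argument of $V$ over a slope-independent path space, test the slope-$v$ minimizer as a suboptimal path in the shifted problem, Taylor-expand $V$ to second order, and run a two-sided convexity sandwich. Your $h_n(s)$ is the paper's $B_*^n$ up to re-anchoring at $v$, your key inequality is the paper's approximate linear domination condition \eqref{finiteDerivativeInequality}, and your sandwich reproduces, inline, the convex case of \Cref{deterministicDifferentiability}, including the conclusion that $\lim_n P_n$ exists rather than being assumed. One genuine (and welcome) simplification: to bound the empirical kinetic energy you use $F_k\ge M_F$ and the shape theorem to get $\limsup_n\frac1n\sum_k V(\Delta_k\gamma_A^n(v))\le \Lambda_0(v)-M_F$, whereas the paper routes this through concavity of $\Lambda_0$ in a coupling constant $\alpha$ multiplying $V$ (\Cref{concavityOfShapeFcn} together with \eqref{concaveImpliesBounds}); your shortcut is valid and suffices, since only the upper bound from that proposition is used in the paper's differentiability proof. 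Do make explicit that the a.s.\ convergence $h_n(s)\to\Lambda_0(v+s)$ is invoked only for countably many $s$ (rational sequences tending to $0$ from either side, plus $s=0$), so a single full-measure event serves; this is what the paper's countable dense set $\mathcal{D}$ accomplishes.

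The one genuine gap is the step you defer: the claim that displacing the argument by $|r|\le s_0$ preserves the estimate, i.e.\ $\sup_{|r|\le s_0}V''(x+r)\le C(V(x)+1)$, equivalently that $V(x+r)$ is comparable to $V(x)$. You assert this follows ``again via \eqref{secondDerivReq}'', but \eqref{secondDerivReq} controls $V''(x+r)$ only in terms of $V(x+r)$; comparing $V(x+r)$ with $V(x)$ requires first-derivative control of $V$, which is not among the hypotheses and does not follow from \eqref{secondDerivReq} in any direct way. This is exactly the paper's \Cref{secondDerivReqImprovement}: one first proves $\limsup_{|x|\to\infty}|V'(x)/V(x)|<\infty$ (display \eqref{derivativeRatio}) by a Riccati-type argument --- setting $g=V'/V$, so that $g'+g^2=V''/V$ is bounded above for large $|x|$, one sees that $g$ decreases wherever $|g|$ exceeds the square root of that bound, and the coercivity assumption \eqref{kineticEnergyGrowthAtInfinity} rules out $g$ staying below $-\sqrt{C}$ --- and then Gronwall's inequality yields $V(x+r)\le e^{c|r|}V(x)$, which combined with \eqref{secondDerivReq} gives \eqref{secondDerivMaxBound}. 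Until this lemma is supplied, your uniform remainder bound, and with it the whole sandwich, is unjustified; once it is, your argument is complete and coincides with the paper's.
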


\begin{remark}
For strictly convex $V$,  
\Cref{thm:differentiabilityZeroTemp} provides heuristic evidence for strict convexity of $\Lambda_0$.
One might expect that 
for $w>v$, the limiting statistics of the increments $(\Delta_k\gamma^n(v))_{k=1}^n$ are dominated in some sense by those of $(\Delta_k\gamma^n(w))_{k=1}^n$. If $V'$ is strictly increasing, this would suggest that 
the right-hand side of 
\eqref{shapeFcnDerivative} is increasing in $v$. 
\end{remark}

\subsection{Positive temperature setting and main results}\label{directedPolymerModelSetUp}
The setup for positive temperature directed polymers is similar to the zero temperature case, but we
will need to impose additional mild restrictions on $F$ and~$V$.
We work on the same probability space as in \Cref{sec:zeroTempModelSetUp}
and use the notation \eqref{eq:p2p_meas}--\eqref{eq:p2p_pf} from 
Section~\ref{sec:intro-poly}.

Our conditions on $F$ and $V$ guarantee that $0<Z_{x,y}^{m,n}<\infty$ for all $x,y\in \R$, $m,n\in\Z$ satisfying $m\le n$ and so all $\mu_{x,y}^{m,n}$ are well-defined.
Although we do not require that $p(\cdot)$ defined by 
\eqref{eq:p-from-V} is a probability density, $\mu_{x,y}^{m,n}$ are probability
measures due to the normalization by $Z_{x,y}^{m,n}$.

We strengthen  the requirement \eqref{potentialIntegrabilityZeroTemp} on $F$ and assume that
\begin{equation}\notag
    \E\,F_k^*(0) < \infty,
\end{equation}
where
\begin{equation}\label{Fkmax}
    F_k^*(x) = \sup_{|y-x|\le 1/2}F_k(y).
\end{equation}

We also introduce two additional requirements on $V$. We assume that 
\begin{equation}\label{linearGrowthV}
    \liminf_{|x|\to \infty}|V'(x)| > 0
\end{equation}
and that there is $\theta\in (0,1)$ such that
\begin{equation}\label{growthConstraintV}
    \limsup_{|x|\to \infty}\frac{|V'(x)|}{|V(x)|^{\theta}} < \infty.
\end{equation}
We still do not require $p$ given by \eqref{eq:p-from-V} to
be a probability density but \eqref{linearGrowthV} implies 
monotonicity and at least linear growth of $V$ at infinity, so 
$p\in L^1(\R)$. 

Polynomial energies $V$ defined in \eqref{eq:exampleKineticEnergies} still satisfy our requirements. In addition, our requirements are preserved under a large class of $C^2$ additive perturbations including perturbations vanishing at infinity with their first and second derivatives.

All of our assumptions on $F$ and $V$ in both settings are preserved under affine transformations of the form $F\mapsto \beta F + C$ and $V\mapsto \alpha V + c$ with constants $\alpha,\beta>0$ and $c,C\in\R$, up to inconsequential adjustments of $M_F$ and~$M_V$.

Let us define $Z^n(v)=Z^{0,n}_{0,vn}$ and 
$\mu_{v}^{n}=\mu_{0,vn}^{0,n}$. 
We will often write   $\mu_{v}^n(f(\gamma))$ to denote 
the expectation of a function $f:\R^{n+1}\to \R$ with respect to $\mu_{v}^n.$

The positive temperature counterpart to~$A^n_*(v)$ is 
\[-\log Z^n(v),\]
the
finite volume free energy in direction $v$.

The basic shape function theorem is proved in Section~\ref{shapeTheoremSection} for completeness:

\begin{theorem}\label{shapeTheoremPolymer}
    Under the above assumptions, there is a convex deterministic function $\Lambda_1:\R\to \R$ such that for every $v\in \R,$
    \begin{equation}\notag
        \Lambda_1(v) = -\lim_{n\to \infty} \frac{1}{n}\log Z^n(v)
    \end{equation}
    $\Prb$-almost surely.
\end{theorem}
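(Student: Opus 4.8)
The plan is to apply Kingman's subadditive ergodic theorem to the finite-volume free energies along a fixed direction $v$, exactly parallel to the zero-temperature argument behind \Cref{shapeTheorem} with $-\log\int e^{-(\cdot)}$ replacing $\inf(\cdot)$. Fix $v\in\R$ and set $X_{m,n}=-\log Z^{m,n}_{vm,vn}$ for integers $m<n$. The key structural input is the Chapman--Kolmogorov identity $Z^{0,m+n}_{0,v(m+n)}=\int_\R Z^{0,m}_{0,z}\,Z^{m,m+n}_{z,v(m+n)}\,dz$, which follows directly from \eqref{eq:p2p_pf} by splitting the path at time $m$ and integrating over the intermediate position $\gamma_m=z$ (the reference measure \eqref{eq:ref-measure} assigns Lebesgue mass $dz$ there). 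Restricting $z$ to a unit window around $vm$ and taking logarithms yields superadditivity of $\log Z$, i.e. subadditivity of $X_{m,n}$, up to a junction error from replacing the free intermediate endpoint $z$ by the exact value $vm$. I would control this error by a deterministic endpoint-continuity estimate for $y\mapsto\log Z^{0,m}_{0,y}$ over a unit window, using the $C^2$-smoothness and the growth control \eqref{growthConstraintV} on $V$; the resulting error is $O(1)$ per junction and hence disappears after dividing by $n$.

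Next I would verify the hypotheses of Kingman's theorem. Stationarity of the law of $(X_{m,n})$ under the diagonal shift $\theta^{1,v}$ is immediate from the space--time stationarity of $F$. Crucially, since $(F_k)_{k\in\Z}$ is i.i.d.\ in time, the time shift $\theta^{1,0}$ is mixing, and composing with the measure-preserving spatial shift keeps $\theta^{1,v}$ ergodic; this is why the limit is almost surely a deterministic constant even though we assume no spatial ergodicity. For integrability I would establish two-sided linear bounds on $\E X_{0,n}$. The upper bound $\E X_{0,n}\le Cn$ comes from a tube estimate: restricting every $\gamma_k$ to the window $[vk-1/2,vk+1/2]$ forces all increments into $[v-1,v+1]$, so the kinetic cost is at most $n\sup_{|s-v|\le 1}V(s)<\infty$ while the environmental cost is at most $\sum_{k=0}^{n-1}F_k^*(vk)$; taking expectations and invoking the strengthened moment assumption $\E F_k^*(0)<\infty$ with \eqref{Fkmax} gives the bound. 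The matching lower bound $\E X_{0,n}\ge -Cn$, equivalently an a priori bound $Z^n(v)<\infty$, follows from $F_k\ge M_F$ together with the at-least-linear growth of $V$ forced by \eqref{linearGrowthV}--\eqref{growthConstraintV}, so that $p=e^{-V}$ from \eqref{eq:p-from-V} lies in $L^1\cap L^\infty$; then $Z^n(v)\le e^{-nM_F}\,p^{*n}(vn)\le e^{-nM_F}\|p\|_\infty\|p\|_{L^1}^{n-1}$. Positivity $Z^n(v)>0$, hence finiteness of $X_{0,n}$, holds since $V$ is finite everywhere and $F_k$ is continuous, making the integrand strictly positive. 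Kingman's theorem then gives $X_{0,n}/n\to\Lambda_1(v)$ almost surely and in $L^1$, with $\Lambda_1(v)=\inf_n\E X_{0,n}/n\in\R$ deterministic.

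Finally I would establish convexity by the standard concatenation of directions. Applying the Chapman--Kolmogorov lower bound with an intermediate point near $vn$ to a length-$2n$ path from $0$ to $(v+w)n$ splits it into a slope-$v$ segment on $[0,n]$ and a slope-$w$ segment on $[n,2n]$, giving $\log Z^{0,2n}_{0,(v+w)n}\ge\log Z^{0,n}_{0,vn}+\log Z^{n,2n}_{vn,(v+w)n}-O(1)$. Dividing by $2n$, letting $n\to\infty$, and using the limits just obtained yields midpoint convexity $\Lambda_1(\tfrac{v+w}{2})\le\tfrac12\Lambda_1(v)+\tfrac12\Lambda_1(w)$; since $\Lambda_1$ is finite and measurable, this upgrades to full convexity. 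I expect the main obstacle to be the junction/endpoint-continuity control in the subadditivity step: unlike on the lattice, forcing the path through an exact intermediate point has zero reference measure, so one must work with a unit window and show that relocating the floating endpoint to $vm$ costs only $O(1)$ uniformly. This is precisely where the smoothness and growth assumptions on $V$ are used, and it is the one place where the continuous-space setting demands genuine care beyond the lattice template.
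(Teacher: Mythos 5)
Your overall architecture (Kingman along the direction $v$, two-sided linear bounds on $\E[-\log Z]$ via a tube estimate and via $p=e^{-V}\in L^1\cap L^\infty$, ergodicity of the skew shift from the i.i.d.-in-time structure, convexity by concatenation) matches the paper's. But there is a genuine gap exactly at the point you flag as needing "genuine care": the claimed \emph{deterministic} $O(1)$ endpoint-continuity estimate for $y\mapsto\log Z^{0,m}_{0,y}$ over a unit window does not follow from smoothness and \eqref{growthConstraintV}, and is not what the assumptions deliver. Relocating the endpoint by $\delta$ changes the last-step energy by $V(\Delta+\delta)-V(\Delta)\approx \delta V'(\Delta)$, which is unbounded in the last increment $\Delta$ (already for $V(x)=x^2$); to convert this into a bound on the ratio of partition functions you must show that the Gibbs weight concentrates on paths whose entry/exit increments lie in a controlled window, and that window is random and must grow with $n$. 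Alternatively, redistributing the displacement along the whole path (a shear coupling) perturbs the potential term by $\sum_{k}[F_k(\gamma_k+\delta k/m)-F_k(\gamma_k)]$, a sum of $m$ terms whose positive parts have positive mean under \eqref{Fkmax}, hence typically of order $m$, not $O(1)$. Indeed, the paper's own comparison of this type (\Cref{exponentialApproximationLemma}) achieves only a subexponential $e^{o(n)}$ error, and even that requires decomposing $Z^n(v)$ into tail pieces $W_{1,\pm}^n,W_{2,\pm}^n$ killed by Borel--Cantelli (using \Cref{tailDecayV}, hence \eqref{linearGrowthV}) and a bulk piece $W_3^n$ whose ratio to $Z^n_{x,vn+y}$ is controlled by $\exp\bigl(C\sup_{I_n}|V'|\bigr)$ over windows $I_n$ chosen so that $V(r_n^{\pm})=n^{(1+1/\theta)/2}$ — this is where \eqref{growthConstraintV} enters, and it gives $o(n)$, not $O(1)$. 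Since your superadditivity-with-error and your midpoint-convexity step both rest on the $O(1)$ junction claim, both steps are unsupported as written; with a random error of size $o(n)$ (let alone $O(m)$) you cannot invoke Kingman directly on $X_{m,n}=-\log Z^{m,n}_{vm,vn}$.

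The missing device is the windowed partition function
\begin{equation*}
Z_*^{m,n}(x,y)=\inf_{|x_1|,|y_1|<1/2} Z^{m,n}_{x+x_1,\,y+y_1},
\end{equation*}
used by the paper. The Chapman--Kolmogorov identity from \eqref{eq:p2p_pf}, restricted to a unit window of intermediate points, gives \emph{exact} supermultiplicativity
$Z_*^{0,n+m}(0,v(n+m)) \ge Z_*^{0,m}(0,vm)\,Z_*^{m,n+m}(vm,v(n+m))$
with no junction error at all, because the infimum over endpoint windows absorbs the floating intermediate point. Kingman then applies cleanly to $-\log Z_*^n(v)$, convexity is run through $Z_*$ as well, and all of the analytic difficulty is isolated in the single comparison $\frac1n|\log Z^n(v)-\log Z_*^n(v)|\to 0$ a.s., which is a separate lemma proved with the Borel--Cantelli/growing-window argument sketched above. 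If you replace your $O(1)$ continuity claim with this windowed construction plus a proof of the $e^{o(n)}$ comparison, your outline becomes the paper's proof.
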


The next theorem is a precise statement of our main result, Theorem~\ref{th:main_high_level},  on differentiability of the shape function
 in the positive temperature case. We prove it in Section~\ref{differentiabilityTheoremProofs}.
\begin{theorem}\label{thm:differentiabilityPolymer} Under the same assumptions, the function $\Lambda_1$ is differentiable everywhere. Furthermore, for every $v\in \R$
    \begin{equation}\notag
        \Lambda_1'(v) = \lim_{n\to \infty}\mu_{v}^n\bigg(\frac{1}{n}\sum_{k=0}^{n-1}V'(\Delta_k \gamma)\bigg)
    \end{equation}
    $\Prb$-almost surely.
\end{theorem}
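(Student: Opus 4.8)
The plan is to exploit the shear invariance of the environment to reduce differentiability of $\Lambda_1$ at a fixed point $v$ to a soft convex-analysis argument, avoiding any control of free-energy fluctuations. For a slope increment $s$, apply the shear $\eta_k=\gamma_k+sk$, which maps $\Gamma^n(v)$ onto $\Gamma^n(v+s)$, preserves the interior Lebesgue factors of the reference measure \eqref{eq:ref-measure}, sends $\Delta_k\gamma\mapsto\Delta_k\gamma+s$, and replaces $F_k(\cdot)$ by $\hat F_k(\cdot)=F_k(\cdot+sk)$. Since the $F_k$ are independent and each is spatially stationary, $\hat F\stackrel{d}{=}F$. Changing variables in $Z^n(v+s)$ therefore yields, for every fixed $n$,
\begin{equation}\notag
-\tfrac1n\log Z^n(v+s)\;\stackrel{d}{=}\;h_n(s):=-\tfrac1n\log\int_{\Gamma^n(v)}\exp\Big(-\sum_{k=0}^{n-1}V(\Delta_k\gamma+s)-\sum_{k=0}^{n-1}F_k(\gamma_k)\Big)\,d\gamma,
\end{equation}
i.e. $h_n(s)$ is the finite-volume free energy at the fixed slope $v$ for the translated self-interaction energy $V(\cdot+s)$. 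Because $V(\cdot+s)$ is a translate of $V$, it satisfies all the standing assumptions, so Theorem~\ref{shapeTheoremPolymer} applies to it; matching the two deterministic a.s.\ limits gives the exact shear identity $\Lambda_1(v+s)=\Lambda_1^{V(\cdot+s)}(v)$, where $\Lambda_1^{W}$ denotes the shape function for self-interaction energy $W$. Differentiability of $\Lambda_1$ at $v$ is thus equivalent to differentiability at $s=0$ of the family of shape functions obtained by translating $V$.

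Next the plan is to linearize the translation. Introduce the self-interaction energy $V+\epsilon V'$ and the corresponding free energy
\begin{equation}\notag
\tilde h_n(\epsilon):=-\tfrac1n\log\int_{\Gamma^n(v)}\exp\Big(-\sum_{k=0}^{n-1}\big(V+\epsilon V'\big)(\Delta_k\gamma)-\sum_{k=0}^{n-1}F_k(\gamma_k)\Big)\,d\gamma .
\end{equation}
The crucial structural point is that $\epsilon$ enters the exponent affinely, so $\tilde h_n$ is a negated log-Laplace transform and hence \emph{concave} in $\epsilon$, with
\begin{equation}\notag
\tilde h_n'(0)=\mu_v^n\Big(\tfrac1n\sum_{k=0}^{n-1}V'(\Delta_k\gamma)\Big),
\end{equation}
precisely the candidate expression in the statement. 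For $\epsilon$ in a neighborhood of $0$ the energy $V+\epsilon V'$ is still coercive and of controlled growth — this is where \eqref{kineticEnergyGrowthAtInfinity}, \eqref{linearGrowthV} and \eqref{growthConstraintV} enter, keeping the partition functions positive and finite — so a subadditivity argument as in Theorem~\ref{shapeTheoremPolymer} (which needs only subadditivity and integrability, not the full $C^2$ hypothesis) produces a deterministic limit $\tilde\Lambda(\epsilon)=\lim_n\tilde h_n(\epsilon)$ a.s., which is concave as a pointwise limit of concave functions.

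The heart of the argument is to compare the true translation with its linearization. By Taylor's theorem with integral remainder and $V\in C^2$, the energies $V(\cdot+s)$ and $V+sV'$ differ by $W_s(x)=\int_0^s (s-r)V''(x+r)\,dr$, and \eqref{secondDerivReq} gives $|W_s(x)|\le Cs^2(1+V(x))$ near infinity. Feeding this difference into the two partition functions and applying Jensen's inequality on both measures bounds the free-energy gap by $Cs^2\big(1+\sup_n\tfrac1n\mu_v^n(\sum_k V(\Delta_k\gamma))\big)$; once the average self-interaction energy per step is bounded uniformly in $n$, this yields $|\Lambda_1(v+s)-\tilde\Lambda(s)|\le Cs^2$ for small $s$. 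I expect this uniform a priori bound on the average self-interaction energy — together with the verification that $V+\epsilon V'$ genuinely admits a shape function — to be the main technical obstacle; it should follow from coercivity \eqref{kineticEnergyGrowthAtInfinity} and the moment bound $\E F_0^*(0)<\infty$ by a standard Gibbsian suppression estimate for paths with large increments, and is the only place where quantitative input is required.

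Finally the plan is to run the convexity squeeze. Writing $e(s)=\Lambda_1(v+s)-\tilde\Lambda(s)$, the function $e$ is convex (a convex function minus a concave one), satisfies $e(0)=0$ and $|e(s)|\le Cs^2$, hence is differentiable at $0$ with $e'(0)=0$. Consequently the one-sided derivatives satisfy $\Lambda_1'(v^{\pm})=\tilde\Lambda'(0^{\pm})$. Since $\tilde\Lambda$ is concave, $\tilde\Lambda'(0^-)\ge\tilde\Lambda'(0^+)$, giving $\Lambda_1'(v^-)\ge\Lambda_1'(v^+)$; but $\Lambda_1$ is convex, so $\Lambda_1'(v^-)\le\Lambda_1'(v^+)$. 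Therefore the two one-sided derivatives coincide and $\Lambda_1$ is differentiable at $v$, and hence everywhere since $v$ is arbitrary. In particular $\tilde\Lambda$ is then differentiable at $0$, so the standard fact that pointwise limits of concave functions have converging derivatives at differentiability points of the limit yields $\tilde h_n'(0)\to\tilde\Lambda'(0)=\Lambda_1'(v)$ almost surely, which is exactly the claimed formula $\Lambda_1'(v)=\lim_n\mu_v^n\big(\tfrac1n\sum_k V'(\Delta_k\gamma)\big)$.
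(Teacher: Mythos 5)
There is a genuine gap, and it sits exactly where you flagged ``the main technical obstacle'': the existence of the deterministic limit $\tilde\Lambda(\epsilon)=\lim_n \tilde h_n(\epsilon)$ for the linearized energy $V+\epsilon V'$. Your parenthetical claim that the shape theorem ``needs only subadditivity and integrability, not the full $C^2$ hypothesis'' is false in this continuous-space positive-temperature setting: supermultiplicativity holds only for the point-to-ball partition function $Z_*^n$, and the bridge from $Z_*^n$ to the point-to-point $Z^n$ --- which is the quantity whose $\epsilon$-derivative equals the polymer expectation you need --- is \Cref{exponentialApproximationLemma}, whose proof (via Lemma~\ref{tailDecayV} and \Cref{secondDerivReqImprovement}) uses \eqref{linearGrowthV}, \eqref{growthConstraintV}, \eqref{secondDerivReq}, and $C^2$ regularity of the energy. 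The energy $W=V+\epsilon V'$ can genuinely leave this class: it is only $C^1$ (conditions on $W''$ would involve $V'''$), and it can violate the derivative growth conditions outright. Concretely, take $V(x)=x^2+x^{-2}\sin(x^3)$ for $|x|\ge 1$, interpolated to be $C^2$ near the origin; this $V$ satisfies all standing assumptions, yet $V''(x)=-9x^2\sin(x^3)+O(1)$, so $W'=V'+\epsilon V''$ oscillates with amplitude of order $\epsilon x^2$. Hence $W'$ has zeros along a sequence tending to infinity, so \eqref{linearGrowthV} fails for $W$, and along the oscillation peaks $|W'|\sim 9\epsilon|W|$, so \eqref{growthConstraintV} fails for every $\theta\in(0,1)$. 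Thus \Cref{shapeTheoremPolymer} cannot simply be invoked for $V+\epsilon V'$, and your whole squeeze argument rests on an object whose existence is unproven. A second, related gap: your $O(s^2)$ comparison between $\Lambda_1(v+s)$ and $\tilde\Lambda(s)$ needs a uniform-in-$n$ almost sure bound on the average self-interaction energy under the \emph{perturbed} polymer measures (energies $V+sV'$ and $V(\cdot+s)$), which you defer to an unnamed ``standard Gibbsian suppression estimate.'' The only mechanism in this framework for such a bound is the concavity trick of \Cref{concavityOfShapeFcnPolymer}, which controls the average energy by one-sided $\alpha$-derivatives of the limiting free energy --- and that again presupposes a shape theorem for the modified energy, available only for scalar multiples $\alpha V$ (which manifestly stay inside the assumption class), not for $V+\epsilon V'$. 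So your two outstanding steps are really the same circularity.

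For contrast, the paper never perturbs $V$ except by positive scalar multiples. It applies the Taylor bound \eqref{taylorTheorem} inside the exponent of $\Zshear^n(w)$, uses Jensen's inequality to obtain the approximate linear domination condition \eqref{finiteDerivativeInequality} for $f_n=-\frac{1}{n}\log\Zshear^n$, with remainder given by the $\mushear_v^n$-expectation of $\sup_{|r|\le1}V''(\Delta_k\gamma+v+r)$; this remainder is bounded via $V''\le C V$ (\Cref{secondDerivReqImprovement}) together with \Cref{concavityOfShapeFcnPolymer}, and then the single deterministic \Cref{deterministicDifferentiability} delivers both differentiability and the derivative formula. Your convex-minus-concave squeeze is an attractive repackaging of the same Taylor idea, but it front-loads the difficulty into constructing shape functions for a one-parameter family of energies that the available machinery cannot handle; to repair it you would essentially have to reprove \Cref{exponentialApproximationLemma} and \Cref{concavityOfShapeFcnPolymer} for $V+\epsilon V'$, which is not possible under the paper's hypotheses alone.
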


\begin{remark} For strictly convex $V$, similarly to the zero temperature case,   our  formula for $ \Lambda_1'(v)$ provides heuristic evidence
of strict convexity of $\Lambda_1$.
\end{remark}

The layout of the remainder of the paper is as follows. 
 In \Cref{shearedEnvironmentIntroduction}, we introduce the sheared environment, a key ingredient in our 
 arguments. In \Cref{differentiabilityTheoremProofs}, we 
 prove our main results, \Cref{thm:differentiabilityZeroTemp,thm:differentiabilityPolymer}.
Sections~\ref{shapeTheoremSection} and~\ref{auxiliaryProofs} play the role of appendices:
 we give fairly standard proofs of  \Cref{shapeTheorem,shapeTheoremPolymer} in \Cref{shapeTheoremSection}, and 
 we prove an auxiliary technical lemma  in \Cref{auxiliaryProofs}.

\section{The Sheared Environment}\label{shearedEnvironmentIntroduction}

A key ingredient to our proofs of \Cref{thm:differentiabilityZeroTemp,thm:differentiabilityPolymer} is the \textit{sheared environment}. Our assumptions on the environment guarantee that under shear transformations $\Prb$ is preserved 
and~$V$ is shifted. Thus, proving $\Prb$-almost sure statements in the original environment is equivalent to proving the analogous $\Prb$-almost sure statement in the sheared environment for shifted $V$. In this section we introduce the relevant concepts and notation.

The shear maps $\Xi_v$, $v\in\R$, acting on paths are defined by 
\[
    (\Xi_v \gamma)_k = \gamma + vk.
\]
For $v\in\R$, the shear map acting on the environment, $\Xi_v^*:\Omega\to \Omega$, is given by
\[(\Xi_v^* F)_k (x)= F_k(x  + vk),\quad x\in\R.
\]
These definitions ensure
\begin{align}
\label{eq:shears-agree}
F_k((\Xi_v\gamma)_k)&= (\Xi^*_v F)_k(\gamma_k).
\end{align}

Our stationarity and independence assumptions on the environment imply that $\Xi_v^*$ is measure preserving
for every $v\in\R$:
\begin{equation}\label{FShearInvariant}
    \Xi_v^* F \stackrel{d}{=} F.
\end{equation}

\subsection{Applying sheared environment to the zero temperature model}\label{sec:shearedEnvironmentZeroTempIntro}
In order to prove \Cref{thm:differentiabilityZeroTemp}, we will study problem \eqref{eq:AF_def} in a sheared environment. Recalling the definition of $\Gamma^n(v)$ in~\eqref{eq:Gamma_nv}, we define
\begin{align}\label{BF_def}
    B_*^n(v) = \inf\Big\{B^n(v,\gamma)\,:\, \gamma\in\Gamma^n(0)\Big\},
\end{align}
where
\[B^n(v,\gamma) =  \sum_{k=0}^{n-1} V(\Delta_k \gamma + v) +  \sum_{k=0}^{n-1}F_k(\gamma_k).\]
Note that for $\gamma\in\Gamma^n(0)$, we can define  $\gamma'=\Xi_v\gamma\in \Gamma^n(v)$, and write
\[
B^n(v,\gamma) =  \sum_{k=0}^{n-1} V(\Delta_k\gamma')+\sum_{k=0}^{n-1}\Xi^*_{-v} F_k(\gamma').
\]
Hence, $B_*^n(v)$ for an environment realization $F$ equals $A_*^n(v)$ for the sheared environment $\Xi^*_{-v} F$. The latter has the same distribution as $F$, so, for a fixed $v\in \R,$ all distributional results for $A_*^n(v)$ and $B_*^n(v)$ are equivalent.  However,
an important benefit of $B_*^n(v)$ is that the set of admissible paths~$\Gamma^n(0)$ in the definition \eqref{BF_def}
does not depend on $v$. This allows for useful direct comparisons between $B_*^n(v)$ and $B_*^n(w)$ for $v\neq w$,
that  are not available for~$A_*^n$, see its definition in \eqref{eq:AF_def}. 

The following lemma states existence of minimizers realizing the infima in the  definitions of $A_*^n(v)$ and $B_*^n(v)$
as measurable maps and makes the distributional relation between $A^n_*(v)$ and $B^n_*(v)$  more precise. 
\begin{lemma}\label{lem:existenceOfMinimizer}
    For all realizations of $F$, all $n\in\N$, all $v\in\R$, there exist minimizers to \eqref{eq:AF_def} and \eqref{BF_def}. Furthermore, there are measurable maps
    \[\gamma_A^n(v):\Omega\to \Gamma^n(v),\]
    \[\gamma_B^n(v):\Omega\to \Gamma^n(0)\]
    providing the infima
    in \eqref{eq:AF_def} and \eqref{BF_def}, respectively, and such that 
    \begin{align}\label{shearedEnvironmentEquality}
        (B_*^n(v),\Xi_v\gamma^n_B(v))_{n\in \N} \stackrel{d}{=} (A_*^n(v),\gamma^n_A(v))_{n\in \N}.
    \end{align}
\end{lemma}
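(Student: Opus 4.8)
The plan is to separate the three assertions — existence of minimizers, measurability of the selections, and the distributional identity \eqref{shearedEnvironmentEquality} — and to handle all of them through a single \emph{canonical} (e.g.\ lexicographically minimal) choice of minimizer, since it is precisely such a choice that will couple the $A$- and $B$-problems pointwise rather than only in distribution.

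First I would establish existence by a standard coercivity-plus-continuity argument. The objective $A^n(\gamma)=\sum_{k=0}^{n-1}V(\Delta_k\gamma)+\sum_{k=0}^{n-1}F_k(\gamma_k)$ from \eqref{eq:path_energy} is continuous in $\gamma\in\Gamma^n(v)$ because $V\in C^2(\R)$ and $F$ is continuous. Since $F_k\ge M_F$ and $V\ge M_V$, every term is bounded below, and if $\max_j|\gamma_j|\to\infty$ while $\gamma_0=0$ and $\gamma_n=vn$ stay fixed then some increment satisfies $|\Delta_k\gamma|\ge\max_j|\gamma_j|/n$, so $V(\Delta_k\gamma)\to+\infty$ by \eqref{kineticEnergyGrowthAtInfinity} while all other terms remain bounded below; hence $A^n$ is coercive on $\Gamma^n(v)$. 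Restricting to the nonempty compact sublevel set below the action of the reference path $k\mapsto vk$, a minimizer exists by continuity. The same argument applies verbatim to $B^n(v,\cdot)$ on $\Gamma^n(0)$, since $x\mapsto V(x+v)$ is again coercive; alternatively, existence for $B$ follows from existence for $A$ through the identity $B^n(v,\gamma)=A^n_{\Xi^*_{-v}F}(\Xi_v\gamma)$ recorded above \eqref{BF_def}.

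Next I would produce the measurable selections. The value $A^n_*(v)$ is measurable, being a countable infimum (by continuity it suffices to infimize over a fixed countable dense subset of $\Gamma^n(v)$). The argmin correspondence $\omega\mapsto\{\gamma:A^n(\gamma)=A^n_*(v)\}$ has nonempty compact values and is measurable because $(\omega,\gamma)\mapsto A^n(\gamma)$ is Carath\'eodory (measurable in $\omega$, continuous in $\gamma$); I would then pin down a single selector by minimizing the free coordinates $\gamma_1,\dots,\gamma_{n-1}$ in lexicographic order, each step being a measurable value function of a Carath\'eodory optimization over a measurable compact-valued correspondence, so that $\gamma_A^n(v)$ is measurable. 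I define $\gamma_B^n(v)$ by the same lexicographic rule on $\Gamma^n(0)$.

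Finally, for \eqref{shearedEnvironmentEquality} I would exploit that the lexicographic rule is $\Xi_v$-equivariant: since $(\Xi_v\gamma)_j=\gamma_j+vj$ shifts each free coordinate by a constant depending only on its index, it preserves the lexicographic order and therefore maps the lex-minimal minimizer of $B^n(v,\cdot)[F]$ to the lex-minimal minimizer of $A^n[\Xi^*_{-v}F]$. Combined with $B^n(v,\gamma)[F]=A^n_{\Xi^*_{-v}F}(\Xi_v\gamma)$, this yields the \emph{pointwise} identities $B_*^n(v)[F]=A_*^n(v)[\Xi^*_{-v}F]$ and $\Xi_v\gamma_B^n(v)[F]=\gamma_A^n(v)[\Xi^*_{-v}F]$ for all $n$ simultaneously. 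Writing $\Phi(G)=(A_*^n(v)[G],\gamma_A^n(v)[G])_{n\in\N}$, the left-hand side of \eqref{shearedEnvironmentEquality} equals $\Phi(\Xi^*_{-v}F)$ while the right-hand side is $\Phi(F)$, so the shear invariance \eqref{FShearInvariant} gives $\Phi(\Xi^*_{-v}F)\stackrel{d}{=}\Phi(F)$, which is exactly \eqref{shearedEnvironmentEquality}. I expect the main obstacle to be not the existence step but arranging the two selections to be coupled so that the shear identity holds surely rather than merely in distribution; the $\Xi_v$-equivariance of a canonical tie-breaking rule is what makes this work, and it is the only place where the possible non-uniqueness of minimizers must be handled with care.
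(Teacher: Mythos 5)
Your proposal is correct, and its skeleton (coercivity plus continuity for existence, a measurable-selection theorem, then measure preservation of the shear for the distributional identity) matches the paper's; the genuine divergence is in how the two selections are coupled. The paper uses no canonical selector at all: it applies the Kuratowski--Ryll-Nardzewski theorem to get \emph{some} measurable $\gamma_A^n(v)$ (verifying weak measurability of the argmin correspondence by hand, via a countable dense set of paths and $1/k$-tolerances), and then simply \emph{defines} $\gamma_B^n(v)(F):=\Xi_{-v}\gamma_A^n(v)(\Xi_{-v}^*F)$, so that the pointwise identities $B_*^n(v)(F)=A_*^n(v)(\Xi_{-v}^*F)$ and $\Xi_v\gamma_B^n(v)(F)=\gamma_A^n(v)(\Xi_{-v}^*F)$ hold by construction, and \eqref{shearedEnvironmentEquality} follows from the measure preservation of $\Xi_{-v}^*$ exactly as in your last display. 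You instead build both selectors intrinsically by lexicographic minimization and recover the same pointwise coupling from the observation that $\Xi_v$, shifting coordinate $j$ by the constant $vj$, is an isomorphism for the lexicographic order; this is correct, and it makes the non-uniqueness issue you flagged disappear, but it is extra work that the paper's pullback definition renders unnecessary --- once one selector exists measurably, the other can simply be conjugated from it. One small caveat on your selection step: the measurable maximum theorem is usually stated for \emph{compact}-valued measurable constraint correspondences, whereas $\Gamma^n(v)\cong\R^{n-1}$ is not compact, so you should first localize to a measurable compact-valued sublevel correspondence (e.g.\ the one determined by the reference path $k\mapsto vk$), or verify weak measurability of the argmin correspondence directly, which is precisely the computation the paper carries out; this is routine but should be said.
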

See \Cref{auxiliaryProofs} for the proof of \Cref{lem:existenceOfMinimizer}.
Just as for $A_*^n(v),B_*^n(v)$, we will also omit the dependence of $\gamma_A^n(v),$ and $\gamma_B^n(v)$ on the random environment. 

\begin{remark}\label{shapeTheoremBn}
     \Cref{shapeTheorem} and Lemma~\ref{lem:existenceOfMinimizer} imply that for every $v\in \R,$
    \begin{equation}
    \label{eq:shapeTheoremBn}
    \Lambda_0(v) = \lim_{n\to \infty}\frac{1}{n}B_*^n(v),\quad \Prb\text{-a.s.}
    \end{equation}
    This formula is our starting point in the proof of differentiability of $\Lambda_0$. 
\end{remark}

\subsection{Applying sheared environment to the positive temperature model}

We define the polymer measure in the sheared environment with slope $v$ to be the measure absolutely continuous with respect 
to~$\refpm_{0,0}^{0,n}(d\gamma)$ given by
\begin{equation}\label{eq:polymerDef}
    \mushear_v^n(d\gamma_0,\dots, d\gamma_n) = \frac{1}{\Zshear^n(v)}
      e^{- B^n(v,\gamma)} \refpm_{0,0}^{0,n}(d\gamma).
\end{equation}
Here, $\Zshear^n(v)$ is the partition function defined so that $\mushear_v^n$ is a probability measure:
\begin{equation}
    \Zshear^n(v) 
    =
     \int_{\R^{n+1}}e^{- B^n(v,\gamma)} \refpm_{0,0}^{0,n}(d\gamma)=
     \int_{\R^{n+1}}e^{- \sum_{k=0}^{n-1}V(\Delta_k \gamma + v)- \sum_{k=0}^{n-1}F_k(\gamma_k) } \refpm_{0,0}^{0,n}(d\gamma).
    \label{eq:Zshear}
\end{equation}
We will sometimes write $\dx{\gamma}$ instead of $\refpm_{0,0}^{0,n}(d\gamma)$ with the understanding that the starting and ending points are fixed in the sheared model. We will often write~$\mushear_v^n(f(\gamma))$ to denote the expectation of a function $f:\R^{n+1}\to \R$ with respect to $\mushear_v^n.$

\begin{lemma}
    The following equality in distribution holds:
    \begin{equation}\label{equalityInDistributionPolymer}
        (\Zshear^n(v), \mushear_v^n\Xi_v^{-1})_{n\in \N} \stackrel{d}{=} (Z^n(v), \mu_{0,vn}^n)_{n\in \N}.
    \end{equation}
\end{lemma}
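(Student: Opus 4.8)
The plan is to prove the distributional identity \eqref{equalityInDistributionPolymer} by the same shear-change-of-variables argument already used for the zero-temperature minimization problem, now applied at the level of the full partition function and polymer measure rather than just the optimal action. The key observation is that the map $\gamma \mapsto \Xi_v\gamma$ is a bijection from $\Gamma^n(0)$-type paths (those ending at $0$) to $\Gamma^n(v)$-type paths (those ending at $vn$), and crucially it preserves the reference measure: since $(\Xi_v\gamma)_k = \gamma_k + vk$ is, for each internal coordinate $1\le k\le n-1$, a translation of $\gamma_k$ by the constant $vk$, and Lebesgue measure is translation invariant, we have that $\Xi_v$ pushes $\refpm_{0,0}^{0,n}$ forward to $\refpm_{0,vn}^{0,n}$. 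The Dirac endpoints match up correctly because $(\Xi_v\gamma)_0 = \gamma_0 = 0$ and $(\Xi_v\gamma)_n = \gamma_n + vn = vn$ when $\gamma_n=0$.

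First I would unpack the energy identity. Using \eqref{eq:shears-agree}, for $\gamma'=\Xi_v\gamma$ we have $\Delta_k\gamma' = \Delta_k\gamma + v$ and $F_k((\Xi_v\gamma)_k)=(\Xi^*_v F)_k(\gamma_k)$, so that
\[
B^n(v,\gamma) = \sum_{k=0}^{n-1}V(\Delta_k\gamma') + \sum_{k=0}^{n-1}(\Xi^*_{-v}F)_k(\gamma'_k),
\]
exactly as recorded in \Cref{sec:shearedEnvironmentZeroTempIntro}. Substituting this into the definition \eqref{eq:Zshear} and performing the change of variables $\gamma'=\Xi_v\gamma$ shows that $\Zshear^n(v)$, computed in environment $F$, equals $Z^n(v)$ computed in the sheared environment $\Xi^*_{-v}F$; the same substitution applied to \eqref{eq:polymerDef} shows that the pushforward $\mushear_v^n\Xi_v^{-1}$ in environment $F$ coincides with $\mu_{0,vn}^n$ in environment $\Xi^*_{-v}F$. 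Since these identities hold simultaneously for all $n\in\N$ (the shear map $\Xi^*_{-v}$ is a single, $n$-independent transformation of $\Omega$), we obtain the pathwise-in-$\omega$ identity
\[
(\Zshear^n(v),\mushear_v^n\Xi_v^{-1})_{n\in\N}(F) = (Z^n(v),\mu_{0,vn}^n)_{n\in\N}(\Xi^*_{-v}F).
\]
Finally I would invoke the measure-preserving property \eqref{FShearInvariant}, namely $\Xi^*_{-v}F\stackrel{d}{=}F$, to conclude that the right-hand side, viewed as a random element, has the same law as the same functional evaluated at $F$, which yields \eqref{equalityInDistributionPolymer}.

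I do not expect any serious obstacle here; the argument is the positive-temperature analogue of the already-established \Cref{lem:existenceOfMinimizer}, and the only point demanding a little care is the measurability and well-definedness of the pushforward $\mushear_v^n\Xi_v^{-1}$ as a random probability measure, together with checking that the normalizing constants $\Zshear^n(v)$ are $\Prb$-a.s.\ finite and positive so that both sides are genuine probability measures. The finiteness and positivity are guaranteed by the standing assumptions on $F$ and $V$ (as stated in \Cref{directedPolymerModelSetUp}, these ensure $0<Z_{x,y}^{m,n}<\infty$), and the measurability of the whole process in $\omega$ is routine since $B^n(v,\gamma)$ is jointly continuous in $(\omega,\gamma)$. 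Thus the mild technical content is confined to bookkeeping, and the substance of the proof is the single change of variables combined with the shear-invariance \eqref{FShearInvariant} of the law of $F$.
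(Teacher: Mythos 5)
Your proposal is correct and follows essentially the same route as the paper: the change of variables $\gamma'=\Xi_v\gamma$ in \eqref{eq:Zshear} and \eqref{eq:polymerDef} identifies $(\Zshear^n(v),\mushear_v^n\Xi_v^{-1})$ computed in environment $F$ with $(Z^n(v),\mu^{0,n}_{0,vn})$ computed in the sheared environment $\Xi^*_{-v}F$, after which \eqref{FShearInvariant} gives the distributional equality. Your added remarks (that the shear pushes $\refpm_{0,0}^{0,n}$ to $\refpm_{0,vn}^{0,n}$ by translation invariance of Lebesgue measure, and that a single $n$-independent map $\Xi^*_{-v}$ yields the joint equality over all $n$) are details the paper leaves implicit, not a different argument.
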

\begin{proof}
    Changing variables $\gamma'=\Xi_v \gamma$, or $\gamma=\Xi^{-1}_{v} \gamma'=\Xi_{-v} \gamma'$,  in~\eqref{eq:Zshear}
    and \eqref{eq:polymerDef}, we obtain using~\eqref{eq:shears-agree}:
    \begin{align*}
    \Zshear^n(v) = &
\int_{\R^n} e^{ - \sum_{k=0}^{n-1}V(\Delta_k \gamma')-\sum_{k=0}^{n-1}F_k((\Xi_{-v} \gamma')_k)}\refpm_{0,vn}^{0,n}(d\gamma')
\\
=&\int_{\R^n} e^{ - \sum_{k=0}^{n-1}V(\Delta_k \gamma')-\sum_{k=0}^{n-1}\Xi^*_{-v}F_k(\gamma'_k)}\refpm_{0,vn}^{0,n}(d\gamma')
    \end{align*}
    and
    \begin{align*}
        \mushear_v^n\Xi_{v}^{-1}(d\gamma') &= \frac{1}{\Zshear^n(v)} e^{- \sum_{k=0}^{n-1}V(\Delta_k \gamma')-\sum_{k=0}^{n-1}F_k((\Xi_{-v} \gamma')_k) }\refpm_{0,vn}^{0,n}(d\gamma')
        \\
        &=
         \frac{1}{\Zshear^n(v)} e^{- \sum_{k=0}^{n-1}V(\Delta_k \gamma')-\sum_{k=0}^{n-1}(\Xi^*_{-v}F)_k( \gamma'_k) }\refpm_{0,vn}^{0,n}(d\gamma')
    \end{align*}
 Applying the equality in distribution  \eqref{FShearInvariant} to these displays gives us \eqref{equalityInDistributionPolymer}.
\end{proof}

\begin{remark}\label{shapeTheoremShear}
     \Cref{shapeTheorem} and Lemma~\ref{equalityInDistributionPolymer}  imply that for every $v\in \R,$
    \begin{equation}\label{eq:shapeTheoremBnPolymer}
    \Lambda_1(v) = -\lim_{n\to \infty}\frac{1}{n}\ln \tilde Z^n(v),\quad \Prb\text{-a.s.}
    \end{equation}
    This formula is our starting point in the proof of differentiability of $\Lambda_1$. 
\end{remark}

\section{Proofs of \Cref{thm:differentiabilityZeroTemp} and \Cref{thm:differentiabilityPolymer}}\label{differentiabilityTheoremProofs}

We begin the proofs of \Cref{thm:differentiabilityZeroTemp,thm:differentiabilityPolymer} with useful deterministic lemmas.
We prove \Cref{thm:differentiabilityZeroTemp} in \Cref{zeroTempDifferentiabilitySection} and \Cref{thm:differentiabilityPolymer} 
in \Cref{polymerDifferentiabilitySection}.

\subsection{Deterministic lemmas}
First, we review some basic facts concerning convex functions, see, e.g.,  Section 1.4 of \cite{lars-erik_2019}. For a convex or concave function $f$ defined in an open set $\mathcal{O}\subset \R,$ the left- and right-hand derivatives of $f$ exist 
at all  $x\in \mathcal{O}$. We 
denote them, respectively, by 
\[\partial^-f(x),\partial^+ f(x).\]
If $f$ is convex and $x< y$ then we have
\[\partial^- f(x) \le \partial^+ f(x) \le \partial^-f(y) \le \partial^+f(y)\]
with the inequalities reversed for concave $f$. The equality $\partial^- f(x) = \partial^+f(x)$ holds if and only if~$f$ is differentiable at $x$.

If a  function of several variables is convex or concave with respect to some variable, then for the associated  left- or right-hand derivatives,
we include that variable in the subscript of $\partial^-$ or $\partial^+$.

The following lemma is 
at the heart of our argument  for the main results. It  derives 
 differentiability of a convex function $f$ at a point~$x_0$ from an approximate linear domination condition.
It also gives a formula 
for $f'(x_0)$ in terms of approximations to~$f$.  
In Sections~\ref{zeroTempDifferentiabilitySection} and~\ref{polymerDifferentiabilitySection},
we will show that our approximate linear domination condition holds for
the sequences $(\frac{1}{n}B^n_*)_{n\in \N}$ and $(-\log \Zshear^n)_{n\in \N}$ approximating the
shape functions~$\Lambda_0$ and~$\Lambda_1$.

\begin{lemma}\label{deterministicDifferentiability}
    Let $\mathcal{O}\subset \R$ be an open set, $x_0\in \mathcal{O}$, and $\mathcal{D}\subset \mathcal{O}$ be dense in $\mathcal{O}.$ Let $(f_n)_{n\in \N}$ be a sequence of functions from $\mathcal{O}$ to $\R$ and $f:\mathcal{O}\to \R$ be a function such that for all $x\in \mathcal{D}\cup\{x_0\}$,
    \[\lim_{n\to \infty}f_n(x) = f(x).\]
    Suppose also that there exists a sequence of real numbers $(g_n)_{n\in \N}$ and a function $h:\mathcal{O}\to \R$ such that the following holds:
    \begin{enumerate}
        \item\label{linearDomination1} There is $\delta > 0$ such that for all $x\in \mathcal{D}\cap (x_0-\delta,x_0+\delta)$ and $n\in \N,$ 
        \begin{equation}\label{finiteDerivativeInequality}
            f_n(x) - f_n(x_0) \le (x-x_0)g_n + h(x),
        \end{equation}
        \item\label{linearDomination2} $\lim_{x\to x_0}\frac{h(x)}{x-x_0 } = 0$.
    \end{enumerate}
    If $f$ is convex, then: $f$ is differentiable at $x_0$, the sequence $(g_n)_{n\in \N}$ converges, and 
    \begin{equation}\label{convexImpliesDifferentiable}
        f'(x_0) = \lim_{n\to \infty}g_n.
    \end{equation}
    If $f$ is concave, then
    \begin{equation}\label{concaveImpliesBounds}
        \partial^+ f(x_0) \le \liminf_{n\to \infty} g_n \le \limsup_{n\to \infty} g_n\le \partial^- f(x_0).
    \end{equation}
\end{lemma}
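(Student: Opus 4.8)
The plan is to exploit the approximate linear domination \eqref{finiteDerivativeInequality} together with the pointwise convergence $f_n\to f$ and the monotonicity of difference quotients of convex/concave functions, taking the two limits in the \emph{correct order}: first $n\to\infty$ for each fixed $x$, and only afterwards $x\to x_0$. I would write $L=\liminf_n g_n$ and $U=\limsup_n g_n$, initially allowing values in $[-\infty,+\infty]$, and extract the differentiability from a clash between the inequalities these limits must satisfy and the intrinsic ordering of one-sided derivatives.

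First I would fix $x\in\mathcal D\cap(x_0,x_0+\delta)$, so that $x-x_0>0$, and choose a subsequence along which $g_n\to L$. Passing to the limit in \eqref{finiteDerivativeInequality} along this subsequence, using $f_n(x)\to f(x)$ and $f_n(x_0)\to f(x_0)$, would give
\[
f(x)-f(x_0)\le (x-x_0)L + h(x);
\]
since the left-hand side is finite, this already forces $L>-\infty$. Dividing by $x-x_0>0$ and letting $x\downarrow x_0$ along $\mathcal D$, the difference quotient $\frac{f(x)-f(x_0)}{x-x_0}$ tends to $\partial^+f(x_0)$ by monotonicity of difference quotients of $f$ (density of $\mathcal D$ permits approaching along $\mathcal D$), while $h(x)/(x-x_0)\to 0$ by hypothesis \eqref{linearDomination2}. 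This would yield $\partial^+ f(x_0)\le L$. Symmetrically, fixing $x\in\mathcal D\cap(x_0-\delta,x_0)$, so $x-x_0<0$, and choosing a subsequence with $g_n\to U$, the limiting form of \eqref{finiteDerivativeInequality} reads $f(x)-f(x_0)\le(x-x_0)U+h(x)$; dividing by the \emph{negative} quantity $x-x_0$ reverses the inequality, and letting $x\uparrow x_0$ along $\mathcal D$ would give $\partial^- f(x_0)\ge U$.

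Combining the two bounds would produce the chain
\[
\partial^+ f(x_0)\le \liminf_n g_n \le \limsup_n g_n \le \partial^- f(x_0),
\]
which is precisely \eqref{concaveImpliesBounds} in the concave case, where $\partial^-f(x_0)\ge\partial^+f(x_0)$ and nothing collapses. In the convex case, I would then invoke the general inequality $\partial^- f(x_0)\le\partial^+f(x_0)$, which sandwiches the whole chain and forces $\partial^- f(x_0)=\partial^+f(x_0)$; this gives differentiability of $f$ at $x_0$, convergence of the full sequence $(g_n)$, and the identity $\lim_n g_n=f'(x_0)$ claimed in \eqref{convexImpliesDifferentiable}.

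The argument is essentially routine, and I do not expect a serious obstacle. The two points demanding care are the \emph{order of the limits} — one must send $n\to\infty$ before $x\to x_0$, since $(g_n)$ need not converge until the argument is complete, which is exactly why I pass to subsequences realizing $L$ and $U$ — and the bookkeeping of the \emph{sign} of $x-x_0$, which dictates whether one extracts the liminf or the limsup subsequence and which one-sided derivative appears. The genuine mechanism delivering differentiability in the convex case is the opposition between the direction of the two derived inequalities and the canonical ordering $\partial^-f\le\partial^+f$ of the one-sided derivatives of a convex function.
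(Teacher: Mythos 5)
Your proof is correct and follows essentially the same route as the paper's: both establish the chain $\partial^+ f(x_0)\le \liminf_n g_n \le \limsup_n g_n \le \partial^- f(x_0)$ by passing to the limit in \eqref{finiteDerivativeInequality} first in $n$ and then along points of $\mathcal{D}$ approaching $x_0$ from the right and left, and then invoke $\partial^- f(x_0)\le\partial^+ f(x_0)$ in the convex case to force equality. The only (immaterial) difference is that you realize $\liminf_n g_n$ and $\limsup_n g_n$ via subsequences, whereas the paper takes $\liminf$/$\limsup$ of the inequality directly.
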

\begin{proof}[Proof of \Cref{deterministicDifferentiability}]
    Let $(x_m)_{m\in \N}\subset \mathcal{D}\cap (x_0,x_0+\delta)$ be a sequence such that $x_m\searrow x_0$. Inequality \eqref{finiteDerivativeInequality} along with the inequality $x_m> x_0$ implies 
    \begin{equation}\notag
        \frac{f_n(x_m) - f_n(x_0)}{x_m-x_0} \le g_n + \frac{h(x_m)}{x_m-x_0}.
    \end{equation}
    Taking $n\to \infty$ in the above we obtain
    \begin{equation}\notag
        \frac{f(x_m) - f(x_0)}{x_m - x_0} \le \liminf_{n\to \infty} g_n + \frac{h(x_m)}{x_m-x_0}.
    \end{equation}
    If $f$ is either concave or convex, then, taking $m\to \infty$, we obtain 
    \begin{equation}\label{rightDerivUpperBound}
        \partial^+ f(x_0) \le \liminf_{n\to \infty} g_n.
    \end{equation}
    Now take a sequence $(y_m)_{m\in \N}\subset \mathcal{D}\cap (x_0-\delta,x_0)$ such that $y_m\nearrow x_0$. 
    Inequality~\eqref{finiteDerivativeInequality} implies 
    \begin{equation}
        \frac{f_n(y_m) - f_n(x_0)}{y_m-x_0} \ge g_n + \frac{h(y_m)}{y_m-x_0}.\notag
    \end{equation}
    Taking $n\to \infty$ we obtain
    \begin{equation}
        \frac{f(y_m) - f(x_0)}{y_m - x_0} \ge \limsup_{n\to \infty} g_n + \frac{h(y_m)}{y_m-x_0}.\notag
    \end{equation}
    If $f$ is either convex or concave, then, taking $m\to \infty$, we obtain 
    \begin{equation}\label{leftDerivLowerBound}
        \partial^- f(x_0) \ge \limsup_{n\to \infty} g_n.
    \end{equation}
     If $f$ is concave, inequalities \eqref{rightDerivUpperBound} and \eqref{leftDerivLowerBound} imply \eqref{concaveImpliesBounds}. Now assume that $f$ is convex. Then  \eqref{rightDerivUpperBound} and \eqref{leftDerivLowerBound} along with the inequality 
    \[\partial^-f(x_0) \le \partial^+ f(x_0)\]
    imply that 
    \[\partial^- f(x_0) = \partial^+ f(x_0)\]
    and 
    \[\limsup_{n\to \infty} g_n = \liminf_{n\to \infty}g_n.\]
    The above two inequalities imply the result because differentiability of a convex function is equivalent to the agreement of its left- and right-hand derivatives.
\end{proof}

The following technical lemma shows that 
requirements \eqref{kineticEnergyGrowthAtInfinity} and \eqref{secondDerivReq}  imply a stronger version of \eqref{secondDerivReq}.
\begin{lemma}\label{secondDerivReqImprovement}
    If \eqref{kineticEnergyGrowthAtInfinity} and \eqref{secondDerivReq} hold, then 
    \begin{equation}\label{secondDerivMaxBound}
        \limsup_{|x|\to \infty}\sup_{|r|\le 1}\frac{V''(x+r)}{V(x)} < \infty
    \end{equation}
    and 
    \begin{equation}\label{derivativeRatio}
        \limsup_{|x|\to \infty}\Big|\frac{V'(x)}{V(x)}\Big| < \infty.
    \end{equation}
\end{lemma}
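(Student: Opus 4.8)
The plan is to deduce both \eqref{secondDerivMaxBound} and \eqref{derivativeRatio} from a single two-sided estimate on the logarithmic derivative of $V$. By \eqref{kineticEnergyGrowthAtInfinity} there is $R_0$ with $V>0$ on $\{|x|\ge R_0\}$, and then \eqref{secondDerivReq} furnishes a constant $c_0>0$ such that $V''(x)\le c_0 V(x)$ for $|x|\ge R_0$. On the half-line $[R_0,\infty)$ I would set $w=(\log V)'=V'/V$, which is $C^1$ and finite since $V\in C^2$ is strictly positive there. A direct computation gives $V''/V=w'+w^2$, so the standing hypothesis turns into the Riccati-type differential inequality
\begin{equation}\notag
w'(x)\le c_0-w(x)^2,\qquad x\ge R_0.
\end{equation}
The whole argument rests on extracting two-sided bounds on $w$ from this inequality.

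For the upper bound I would argue that $w$ cannot cross the level $\sqrt{c_0}$ upward: wherever $w=\sqrt{c_0}$ one has $w'\le 0$, so a last-crossing argument shows $w(x)\le\max\{w(R_0),\sqrt{c_0}\}$ for all $x\ge R_0$. The lower bound is the crux of the proof and the step I expect to be the main obstacle. Here I would exploit that the inequality forces finite-time blow-up: if $w(x_1)\le-\sqrt{2c_0}$ at some $x_1\ge R_0$, then $w$ stays below $-\sqrt{2c_0}$ (it is strictly decreasing there) and, writing $p=-w\ge\sqrt{2c_0}$, one gets $p'\ge p^2/2$, so $1/p(x)\le 1/p(x_1)-(x-x_1)/2$ drives $w\to-\infty$ within the finite span $[x_1,x_1+\sqrt{2/c_0}]\subset[R_0,\infty)$. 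This contradicts the finiteness of $w$ (equivalently, of $V\in C^2$ with $V>0$), so $w>-\sqrt{2c_0}$ throughout. Combining the two bounds, $w=V'/V$ is bounded on $[R_0,\infty)$.

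To cover the left tail $(-\infty,-R_0]$ I would apply the same analysis to the reflected potential $\hat V(x)=V(-x)$, which satisfies the identical hypotheses on $[R_0,\infty)$ and whose logarithmic derivative is $-V'(-x)/V(-x)$; this yields boundedness of $V'/V$ on $(-\infty,-R_0]$ as well. Together these produce a constant $K$ with $|V'(x)/V(x)|\le K$ for $|x|\ge R_0$, which is exactly \eqref{derivativeRatio}.

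Finally, \eqref{secondDerivMaxBound} would follow by integrating this gradient bound. Since $|(\log V)'|\le K$ on each of the two half-lines, $\log V$ is $K$-Lipschitz there, so for $|x|\ge R_0+1$ and $|r|\le 1$ the points $x$ and $x+r$ lie on the same half-line and hence $V(x+r)\le e^{K}V(x)$. Applying $V''(x+r)\le c_0 V(x+r)$, which is valid since $|x+r|\ge R_0$, then gives $V''(x+r)\le c_0 e^{K}V(x)$ uniformly in $|r|\le 1$, yielding \eqref{secondDerivMaxBound}.
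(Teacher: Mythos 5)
Your proof is correct and follows essentially the same route as the paper: both introduce the logarithmic derivative $w=V'/V$, rewrite \eqref{secondDerivReq} as the Riccati-type inequality $w'+w^2\le c_0$ near infinity, bound $w$ on each half-line, and then integrate that bound (Lipschitz control of $\log V$, i.e.\ Gronwall) to compare $V(x+r)$ with $V(x)$ and deduce \eqref{secondDerivMaxBound}. If anything, your treatment is more explicit than the paper's: the paper handles both signs of $w$ with a single monotonicity remark, whereas the lower bound genuinely requires the finite-time blow-up argument you supply (monotonicity alone does not prevent $w$ from drifting to $-\infty$), so your write-up fills in a step the paper leaves terse.
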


\begin{proof}
    First, we prove the version of \eqref{derivativeRatio} where the $\limsup$ is taken as $x\to+\infty.$ For sufficiently large $x$, $V(x)\ne 0$, and we can define $g(x) = \frac{V'(x)}{V(x)}$,
  so that $\frac{V''(x)}{V(x)} = g'(x) + |g(x)|^2.$  Display \eqref{secondDerivReq} means that
    \[\limsup_{x\to \infty}\Big[g'(x) + |g(x)|^2\Big] < \infty.\]
    Let $L,C > 0$ be such that 
    \[\sup_{x>L}\{g'(x) + |g(x)|^2\} < C.\]
    If $x\in \R$ satisfies $x>L$ and $|g(x)| > \sqrt{C},$ the above implies that $g'(x) < 0$ and so~$g$ is decreasing at such $x$. It follows that 
    \[\sup_{x>L}|g(x)| \le \max(g(L),\sqrt{C}) < \infty.\]
    This proves \eqref{derivativeRatio} for $x\to +\infty.$ The case $x\to -\infty$ is treated similarly.

    Let us prove \eqref{secondDerivMaxBound}. Display \eqref{derivativeRatio} implies that there is $R>0$ and $c > 0$ such that if $|x|>R$ then $V'(x) \le c V(x)$ and $-V'(x) \le c V(x).$ It follows from Gronwall's inequality that if $|x|,|x+r|>R$, then $V(x+r)\le e^{c|r|}V(x).$ Thus,
    \[ \limsup_{|x|\to \infty}\sup_{|r|\le 1}\frac{V''(x+r)}{V(x)} \le \limsup_{|x|\to \infty}\sup_{|r|\le 1}\frac{V''(x+r)}{V(x+r)} \cdot \sup_{|r|\le 1}\frac{V(x+r)}{V(x)}< \infty,\]
    proving \eqref{secondDerivMaxBound}.
\end{proof}

\subsection{Zero temperature model}\label{zeroTempDifferentiabilitySection}
For perturbative analysis, it will be helpful to extend the definition of the zero temperature model in \eqref{BF_def} to include scalar multiples of the environment and the kinetic action by parameters $\alpha,\beta> 0$. Specifically, let
\[B^n(v,\gamma,\alpha,\beta) =\alpha \sum_{k=0}^{n-1}V(\Delta_k \gamma + v)+\beta\sum_{k=0}^{n-1}F_k(\gamma_k)\]
and
\begin{align}\notag
    B_*^n(v,\alpha,\beta) = \inf\Big\{B^n(v,\gamma,\alpha,\beta)\,:\, \gamma\in\Gamma^n(0)\Big\}.
\end{align}
If $F$ and $V$ obey the assumptions outlined in \Cref{sec:zeroTempModelSetUp}, then  so do $\alpha V$ and $\beta F$. Thus, \Cref{shapeTheorem} holds if we replace~$B^n_*(v)$ by $B^n_*(v,\alpha,\beta)$, and allows to define the associated shape function 
\[
\Lambda_0(v,\alpha,\beta)=\lim_{n\to\infty}\frac{1}{n}B_*^n(v,\alpha,\beta) 
\]   for all $\alpha,\beta> 0$.

The proposition below establishes concavity of $\Lambda_0$ with respect to $\alpha$ and $\beta.$ Other types of concavity results have been noted previously in related zero temperature models, see Lemma 3.1 in \cite{batesEmpiricalEnvironment} or Section 6.5 of \cite{Hammersley1965}. Although in the proof of our main differentiability theorem we only use the upper bound contained in~\eqref{empiricalKineticEnergy}, proving the full proposition requires little extra work.

We need some additional notation.
For any path $\gamma\in \Gamma^n(0)$, we define 
\begin{equation}
\label{eq:Vbar}
\overline{V_n}(v,\gamma) = \frac{1}{n}\sum_{k=0}^{n-1}V(\Delta_k\gamma + v)
\end{equation}
and 
\begin{equation}
\label{eq:Fbar}
\overline{F_n}(\gamma) = \frac{1}{n}\sum_{k=0}^{n-1}F_k(\gamma_k).
\end{equation}
The minimizer for $B^n_*(v,\alpha,\beta)$ constructed in~Lemma~\ref{lem:existenceOfMinimizer} is denoted by $\gamma^n(v,\alpha,\beta)$.

\begin{proposition}\label{concavityOfShapeFcn}
    The shape function $\Lambda_0$ is concave in $\alpha$ and $\beta.$ Furthermore, for every $v\in \R,$ $\alpha,\beta>0$,
    \begin{multline}\label{empiricalKineticEnergy}
        \partial_\alpha^+\Lambda_0(v,\alpha,\beta) \le \liminf_{n\to \infty} \overline{V_n}(v,\gamma^n(v,\alpha,\beta)) 
       \\ \le \limsup_{n\to \infty}\overline{V_n}(v,\gamma^n(v,\alpha,\beta))) \le\partial_\alpha^- \Lambda_0(v,\alpha,\beta)
    \end{multline} 
    and
    \begin{multline}\label{empiricalPotentialEnergy}
        \partial_\beta^+\Lambda_0(v,\alpha,\beta) \le \liminf_{n\to \infty} \overline{F_n}(\gamma^n(v,\alpha,\beta)) 
        \\ \le \limsup_{n\to \infty}\overline{F_n}(\gamma^n(v,\alpha,\beta)) \le\partial_\beta^- \Lambda_0(v,\alpha,\beta)
    \end{multline}
    with $\Prb$-probability one.
\end{proposition}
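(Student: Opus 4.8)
The plan is to exploit the fact that, after dividing by $n$, the quantity $B^n_*$ is an infimum of functions that are \emph{affine} in the pair $(\alpha,\beta)$, and then to feed the resulting supporting-line inequality into \Cref{deterministicDifferentiability}. Writing
\[
\tfrac1n B_*^n(v,\alpha,\beta) = \inf_{\gamma\in\Gamma^n(0)}\Big[\alpha\,\overline{V_n}(v,\gamma) + \beta\,\overline{F_n}(\gamma)\Big],
\]
I observe that for each fixed $\gamma$ the bracket is affine in $(\alpha,\beta)$, so the infimum is concave in $(\alpha,\beta)$ jointly, hence concave in $\alpha$ for fixed $\beta$ and in $\beta$ for fixed $\alpha$, for every realization of $F$ and every $n$. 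Concavity of $\Lambda_0$ then follows by passing to the limit: the finite-$n$ concavity inequality holds surely, so for fixed $v,\beta$ and any $\alpha_1,\alpha_2,\lambda\in[0,1]$, on the full-measure event where the parametrized version of \eqref{eq:shapeTheoremBn} furnished by \Cref{shapeTheorem} holds at $\alpha_1$, $\alpha_2$, and $\lambda\alpha_1+(1-\lambda)\alpha_2$, I take $n\to\infty$; since $\Lambda_0$ is deterministic this yields concavity in $\alpha$, and symmetrically in $\beta$.

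For the derivative bounds, fix $v$ and $\beta$ and let $\gamma^*=\gamma^n(v,\alpha,\beta)$ be the minimizer at the base point $\alpha$. Since $\gamma^*$ attains the infimum at $\alpha$ but is merely feasible at any other $\alpha'$, the affine map $\alpha'\mapsto \alpha'\,\overline{V_n}(v,\gamma^*)+\beta\,\overline{F_n}(\gamma^*)$ is a supporting line from above for $\alpha'\mapsto \tfrac1n B_*^n(v,\alpha',\beta)$, tangent at $\alpha$, giving the exact supergradient inequality
\[
\tfrac1n B_*^n(v,\alpha',\beta) - \tfrac1n B_*^n(v,\alpha,\beta) \le (\alpha'-\alpha)\,\overline{V_n}(v,\gamma^*),\qquad \alpha'>0,
\]
valid for every realization and every $n$. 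This is exactly the linear domination condition \eqref{finiteDerivativeInequality} of \Cref{deterministicDifferentiability}, with $f_n=\tfrac1n B_*^n(v,\cdot,\beta)$, $f=\Lambda_0(v,\cdot,\beta)$, $x_0=\alpha$, $g_n=\overline{V_n}(v,\gamma^n(v,\alpha,\beta))$, and vanishing error term $h\equiv 0$, so the second hypothesis of the lemma is trivially met. Taking $\mathcal O=(0,\infty)$ and $\mathcal D$ the positive rationals, the concave case \eqref{concaveImpliesBounds} of the lemma delivers \eqref{empiricalKineticEnergy} verbatim. The inequality \eqref{empiricalPotentialEnergy} is obtained identically, now varying $\beta$ with $\alpha$ fixed, so that the supporting slope is $\overline{F_n}(\gamma^n(v,\alpha,\beta))$.

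The only point demanding care is the almost-sure bookkeeping: \Cref{deterministicDifferentiability} is deterministic and requires $f_n\to f$ simultaneously at every point of $\mathcal D\cup\{x_0\}$. As $\mathcal D$ is countable and the parametrized convergence holds almost surely at each fixed parameter value, a countable intersection produces one full-measure event on which the hypothesis holds; the supergradient inequality itself is valid for all realizations, so no further exceptional sets arise, and the conclusions hold with probability one for the fixed triple $(v,\alpha,\beta)$. I expect no serious obstacle here. The one conceptual trap to avoid is the temptation to differentiate through the minimizer and control how $\gamma^n(v,\alpha,\beta)$ varies with $\alpha$, which would force an envelope-type error analysis; the clean insight is that the supporting-line inequality above is \emph{exact}, making the empirical kinetic (resp.\ potential) energy at the minimizer literally a supergradient of the concave map $\alpha\mapsto\tfrac1n B_*^n$ (resp.\ $\beta\mapsto\tfrac1n B_*^n$), so that \Cref{deterministicDifferentiability} applies with $h\equiv0$.
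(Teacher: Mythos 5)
Your proposal is correct and follows essentially the same route as the paper: concavity from the fact that $B_*^n$ is an infimum of functions affine in $(\alpha,\beta)$ (the paper phrases this as splitting the minimum), followed by the exact supporting-line inequality at the minimizer fed into \Cref{deterministicDifferentiability} with $h\equiv 0$ and a countable dense set $\mathcal D$ to control the almost-sure bookkeeping. The choices of $f_n$, $f$, $x_0$, $g_n$, and $h$ match the paper's verbatim, so there is nothing to correct.
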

\begin{proof}
    Let $\alpha_1,\alpha_2,\beta_1,\beta_2> 0$ and $t\in [0,1]$. Let $\alpha = t\alpha_1 + (1-t)\alpha_2$ and $\beta = t\beta_1 + (1-t)\beta_2.$ Then,
    \begin{align*}
        B_*^n(v,\alpha,\beta) &= \min\Big\{  \alpha \sum_{k=0}^{n-1}V(\Delta_k \gamma+v) + \beta \sum_{k=0}^{n-1} F_k(\gamma_k)\,:\,\gamma\in\Gamma^n(0)\Big\}\\
        & \ge t \min\Big\{  \alpha_1 \sum_{k=0}^{n-1}V(\Delta_k \gamma+v) + \beta_1 \sum_{k=0}^{n-1} F_k(\gamma_k)\,:\,\gamma\in\Gamma^n(0)\Big\} \\
        & \qquad\qquad + (1-t) \min\Big\{  \alpha_2 \sum_{k=0}^{n-1}V(\Delta_k \gamma+v) + \beta_2 \sum_{k=0}^{n-1} F_k(\gamma_k)\,:\,\gamma\in\Gamma^n(0)\Big\}\\
        & = tB_*^n(v,\alpha_1,\beta_1) + (1-t)B_*^n(v,\alpha_2,\beta_2),
    \end{align*}
    which proves concavity of 
    $B_*^n(v,\alpha,\beta)$ in $\alpha$ and $\beta$.    
 \Cref{shapeTheorem} then implies that $\Lambda_0$ is concave in $(\alpha,\beta)$ as well since
    \begin{align}\label{convexityInLimit}
        \Lambda_0(v,\alpha,\beta) &= \lim_{n\to \infty}B_*^n(v,\alpha,\beta)\notag \\
        & \ge t\lim_{n\to \infty}B_*^n(v,\alpha_1,\beta_1) + (1-t)\lim_{n\to \infty}B_*^n(v,\alpha_2,\beta_2)\notag\\
        & = t\Lambda_0(v,\alpha_1,\beta_1) + (1-t)\Lambda_0(v,\alpha_2,\beta_2).
    \end{align}
    Now we establish \eqref{empiricalKineticEnergy}. For $\beta,\alpha,\alpha'> 0$,
    we can use the minimizer  $\gamma^n(v,\alpha,\beta)$ realizing $B_*^n(v,\alpha,\beta)$ to estimate  
    $B_*^n(v,\alpha',\beta)$:
    \begin{align*}
        B_*^n(v,\alpha',\beta) &\le \beta\sum_{k=0}^{n-1}F_k(\gamma^n(v,\alpha,\beta)) + \alpha'\sum_{k=0}^{n-1}V(\Delta_k \gamma^n(v,\alpha,\beta)+v)\\
        & = B_*^n(v,\alpha,\beta) + (\alpha'-\alpha) \sum_{k=0}^{n-1}V(\Delta_k \gamma^n(v,\alpha,\beta)+v)\\
        & = B_*^n(v,\alpha,\beta) + 
        (\alpha'-\alpha) n\overline{V_n}(v,\gamma^n(v,\alpha,\beta)).
    \end{align*}
    This estimate allows us to apply \Cref{deterministicDifferentiability}, and specifically \eqref{concaveImpliesBounds} to conclude~\eqref{empiricalKineticEnergy}. We can choose $\mathcal{D}$ to be any countable dense subset of $\R.$ Fix some $\alpha,\beta>0.$ Due to \Cref{shapeTheorem}, with $\Prb$-probability one, for all $x\in \mathcal{D}\cup\{\alpha\},$
    \[\lim_{n\to \infty}\frac{1}{n}B_*^n(v,x,\beta) = \Lambda_0(v,x,\beta).\]
    Thus, with $\Prb$-probability one, the conditions of \Cref{deterministicDifferentiability} are satisfied with
    \begin{align*}
        x_0 &= \alpha,\\
        f_n(x) &= \frac{1}{n}B_*^n(v,x,\beta),\\
        f(x) &= \Lambda_0(v,x,\beta),\\
        g_n &= \overline{V_n}(v,\gamma^n(v,\alpha,\beta)),\\
        h(x) &= 0,
    \end{align*}
    and~\eqref{empiricalKineticEnergy} follows.
    The estimates in \eqref{empiricalPotentialEnergy} follow similarly.
\end{proof}

\begin{remark}
    Similarly to the Proposition 2.3 in \cite{Com17} or the stronger results contained in \cite{batesEmpiricalEnvironment}, \Cref{concavityOfShapeFcn} and the fact that convex functions are differentiable Lebesgue almost everywhere imply that for Lebesgue almost every $(\alpha,\beta)\in (0,\infty)^2$, $\overline{V_n}(v,\gamma^n(v,\alpha,\beta)))$ and $\overline{F_n}(\gamma^n(v,\alpha,\beta))$ converge $\Prb$-almost surely. An analogous result holds in the positive temperature case, see \Cref{concavityOfShapeFcnPolymer}.
\end{remark}

For $v\in\R$, we denote
   \begin{equation*}
        M_{v,\infty}=\limsup_{n\to \infty}\frac{1}{n}\sum_{k=0}^{n-1} \sup_{|r|\le 1}V''(\Delta_k\gamma^n(v) + v + r),
    \end{equation*}
where
\begin{equation}
\label{eq:gammanv}
\gamma^n(v)=\gamma^{0,n}_B(v)=\gamma^{n}(v,1,1).
\end{equation}

\begin{lemma}\label{boundedSecondDerivSums}
   For all $v\in \R,$ 
    \begin{equation}\label{limsupSecondDerivative}
       M_{v,\infty}< \infty
    \end{equation}
    with $\Prb$-probability one.
\end{lemma}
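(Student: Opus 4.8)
The plan is to dominate each summand $\sup_{|r|\le 1}V''(\Delta_k\gamma^n(v)+v+r)$ pointwise by a constant plus a fixed multiple of the corresponding kinetic energy $V(\Delta_k\gamma^n(v)+v)$, and then to control the resulting average using the almost sure upper bound on the empirical kinetic energy already furnished by \Cref{concavityOfShapeFcn}. The whole point is that finiteness of $M_{v,\infty}$ is \emph{inherited} from finiteness of the kinetic-energy average, which was pinned down in the previous proposition; the bridge between the two is the improved growth estimate of \Cref{secondDerivReqImprovement}.

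The first step is to establish a global pointwise estimate: there exist constants $C,b>0$ such that
\[
\sup_{|r|\le 1} V''(x+r) \le C\,V(x) + b, \qquad x\in\R.
\]
For large $|x|$ this follows from \eqref{secondDerivMaxBound}, since by \eqref{kineticEnergyGrowthAtInfinity} there is $R>0$ with $V(x)>0$ for $|x|>R$, so choosing $C$ strictly above the finite $\limsup$ in \eqref{secondDerivMaxBound} yields $\sup_{|r|\le 1}V''(x+r)\le C\,V(x)$ for $|x|>R$. On the compact set $\{|x|\le R\}$ both $x\mapsto \sup_{|r|\le 1}V''(x+r)$ and $V$ are continuous, hence the difference $\sup_{|r|\le 1}V''(x+r)-C\,V(x)$ attains a finite maximum there, which I absorb into $b$ (and I may take $b\ge 0$). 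Note that no sign information on $V''$ is needed, since only an upper bound is sought.

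Applying this to the increments $\Delta_k\gamma^n(v)+v$ of the zero-temperature minimizer, summing over $k$ and dividing by $n$, I obtain, in the notation \eqref{eq:Vbar},
\[
\frac{1}{n}\sum_{k=0}^{n-1}\sup_{|r|\le 1}V''(\Delta_k\gamma^n(v)+v+r) \;\le\; C\,\overline{V_n}(v,\gamma^n(v)) + b.
\]
Taking $\limsup_{n\to\infty}$, the left-hand side is exactly $M_{v,\infty}$, while recalling $\gamma^n(v)=\gamma^n(v,1,1)$ from \eqref{eq:gammanv} and invoking \eqref{empiricalKineticEnergy} of \Cref{concavityOfShapeFcn} with $\alpha=\beta=1$ bounds $\limsup_{n\to\infty}\overline{V_n}(v,\gamma^n(v))$ by $\partial_\alpha^-\Lambda_0(v,1,1)$, with $\Prb$-probability one. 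Since $\alpha\mapsto\Lambda_0(v,\alpha,1)$ is a finite concave function on the open half-line $(0,\infty)$ and $\alpha=1$ is an interior point, its one-sided derivative $\partial_\alpha^-\Lambda_0(v,1,1)$ is finite; hence $M_{v,\infty}\le C\,\partial_\alpha^-\Lambda_0(v,1,1)+b<\infty$ almost surely.

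I do not anticipate a serious obstacle here. The only points requiring care are verifying that the pointwise domination is valid on all of $\R$ — handling the compact region by continuity and compactness rather than by the growth hypothesis — and observing that the relevant left derivative of the concave function $\Lambda_0$ is automatically finite because $\alpha=1$ lies in the interior of the domain of concavity. Everything substantive has already been done in \Cref{secondDerivReqImprovement} and \Cref{concavityOfShapeFcn}.
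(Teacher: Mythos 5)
Your proof is correct and follows essentially the same route as the paper's: dominate $\sup_{|r|\le 1}V''(\cdot+v+r)$ by a constant plus a multiple of $V(\cdot+v)$ using \Cref{secondDerivReqImprovement} (with the compact region handled by continuity, where the paper instead splits the sum with indicators), then control $\limsup_n \overline{V_n}(v,\gamma^n(v))$ via \eqref{empiricalKineticEnergy} of \Cref{concavityOfShapeFcn}. Your explicit remark that $\partial_\alpha^-\Lambda_0(v,1,1)$ is finite because $\alpha=1$ is interior to the domain of the finite concave function $\alpha\mapsto\Lambda_0(v,\alpha,1)$ is a point the paper leaves implicit, and it is a welcome clarification.
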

\begin{proof} Let us take any path $\gamma\in \Gamma^n(0)$.
    According to \Cref{secondDerivReqImprovement}, there is $L>0$ and $C>0$ such that if $|\Delta_k \gamma^n| > L$, then 
    \[\sup_{|r|\le 1}V''(\Delta_k\gamma + v + r) \le C V(\Delta_k\gamma + v).\]
    Define $A := \sup_{|x|\le L,|r|<1} V''(x + v + r) < \infty$ and $b = \min(\inf_{x\in \R} V(x),0)$. We have
    \begin{align}\label{upperBoundL}
        \frac{1}{n}&\sum_{k=0}^{n-1} \sup_{|r|\le 1}V''(\Delta_k\gamma + v + r)\notag\\
         & = \frac{1}{n}\sum_{k=0}^{n-1} \sup_{|r|\le 1}V''(\Delta_k\gamma + v + r) \1_{|\Delta_k\gamma| \le L} + \frac{1}{n}\sum_{k=0}^{n-1} \sup_{|r|\le 1}V''(\Delta_k\gamma + v + r) \1_{|\Delta_k\gamma| > L}\notag\\
         & \le \frac{1}{n}\sum_{k=0}^{n-1} \sup_{|r|\le 1}V''(\Delta_k\gamma + v + r) \1_{|\Delta_k\gamma| \le L} + \frac{1}{n}\sum_{k=0}^{n-1} C( V(\Delta_k\gamma + v)-b) \1_{|\Delta_k\gamma| > L}\notag\\
        & \le A + \frac{C}{n}\sum_{k=0}^{n-1}V(\Delta_k\gamma + v) -C b\notag\\
        & = A + C\overline{V_n}(v,\gamma) - C b.
    \end{align}
    Substituting $\gamma=\gamma^n(v)$, taking $\limsup$ of both sides, and applying~\Cref{concavityOfShapeFcn}, we obtain~\eqref{limsupSecondDerivative}. 
\end{proof}

Now we can prove the main differentiability result for the zero temperature model.

\begin{proof}[Proof of \Cref{thm:differentiabilityZeroTemp}] Let us fix any $v\in\R$.
    For all $x,w\in \R$ satisfying $|w-v|\le 1,$ Taylor's Theorem implies
    \begin{align}\label{taylorTheorem}
        V(x + w) \le V(x + v) + (w-v)V'(x + v) + \frac{1}{2}(w-v)^2\sup_{|r|\le 1}V''(x + v + r).
    \end{align}
    It follows that for any path $\gamma\in\Gamma^n(0)$,
    \begin{multline*}
        \frac{1}{n}B^n(w,\gamma) \le \frac{1}{n}B^n(v,\gamma) 
        + (w-v)\frac{1}{n}\sum_{k=0}^{n-1} V'(\Delta_k\gamma + v)\\ + \frac{1}{2n}(w-v)^2 \sum_{k=0}^{n-1}\sup_{|r|\le 1}V''(\Delta_k\gamma + v + r).
    \end{multline*}
    Plugging 
    $\gamma = \gamma^n(v)$ (see \eqref{eq:gammanv}) into the above, and using the bound 
    $B_*^n(w) \le B^n(w,\gamma^n)$ and identity $B_*^n(v) = B^n(v,\gamma^n)$,
    we obtain
    \begin{equation*}
        \frac{1}{n}B_*^n(w) \le \frac{1}{n}B_*^n(v) + (w-v)\overline{V'_n}(v) +  \frac{1}{2n}(w-v)^2 \sum_{k=0}^{n-1}\sup_{|r|\le 1}V''(\Delta_k\gamma^n(v) + v + r),
    \end{equation*}
where 
\[\overline{V'_{n}}(v) = \frac{1}{n}\sum_{k=0}^{n-1}V'(\Delta_k \gamma^n(v) + v).\]
Thus, 
    Lemma~\ref{boundedSecondDerivSums} implies that for sufficiently large $n$,
    \begin{equation}\notag
        \frac{1}{n}B_*^n(w) \le \frac{1}{n}B_*^n(v) + (w-v)\overline{V'_n}(v) +  \frac{1}{2}(w-v)^2 (M_{v,\infty}+1).
    \end{equation}
    Thus, $B_*^n$ satisfies the approximate linear domination condition~\eqref{finiteDerivativeInequality} at $v\in \R$ with 
    \begin{align*}
        x_0 &= v,\\
        f_n(x) &= \frac{1}{n}B_*^n(x),\\
        g_n & = \overline{V'_n}(x),\\
        h(x) &= \frac{1}{2}(x-v)^2 (M_{v,\infty}+1).
    \end{align*}
 Now our theorem follows from  Lemma~\ref{deterministicDifferentiability}. To apply it, it remains to take $f = \Lambda_0$ (which is convex due to 
 \Cref{shapeTheorem}), take any countable dense set $\mathcal{D}\subset \R$, 
 and notice (see Remark~\ref{shapeTheoremBn}) that 
 on an event of probability one, \eqref{eq:shapeTheoremBn} holds
    for all $x\in \mathcal{D}\cup \{v\}$. This completes the proof.   
    \end{proof}

\subsection{Positive temperature model}\label{polymerDifferentiabilitySection}

Similarly to our treatment of the zero temperature model, we will extend the definition of the directed polymer given in \eqref{eq:polymerDef} to energies depending on $\alpha,\beta> 0:$
\begin{equation}\notag
    \mushear_{v,\alpha,\beta}^n(d\gamma_0,\dots,d\gamma_n) = \frac{1}{\Zshear^n(v,\alpha,\beta)}e^{-B^n(v,\gamma,\alpha,\beta)} 
    \refpm^{0,n}_{0,0}(d\gamma),
\end{equation}
where the partition function $\Zshear^n(v,\alpha,\beta)$ is defined to ensure that $\mushear_{v,\alpha,\beta}^n$ is a probability measure. 

Similarly to the zero temperature case, \Cref{shapeTheoremPolymer}
holds if we replace~$B^n_*(v)$ by $B^n_*(v,\alpha,\beta)$, and allows to define the associated shape function 
\[
\Lambda_1(v,\alpha,\beta)=-\lim_{n\to\infty}\frac{1}{n}\log\Zshear^n(v,\alpha,\beta)
\]   for all $\alpha,\beta> 0$.

Let us recall the notation introduced in~\eqref{eq:Vbar} and~\eqref{eq:Fbar} and establish the positive temperature analog to \Cref{concavityOfShapeFcn}. Similarly to \Cref{concavityOfShapeFcn}, for our main differentiability theorem we only really need the upper bound in \eqref{empiricalKineticEnergyPolymer}.
 A similar concavity result in the fully discrete case is Proposition 2.1 in \cite{Com17}. As with \Cref{concavityOfShapeFcn}, the below lemma implies that for Lebesgue almost every $(\alpha,\beta)\in (0,\infty)^2$, $\mushear_{v,\alpha,\beta}^n(\overline{V_n}(v,\gamma))$ and $\mushear_{v,\alpha,\beta}^n(\overline{F_n}(\gamma))$ converge $\Prb$-almost surely.

\begin{proposition}\label{concavityOfShapeFcnPolymer}
    The shape function $\Lambda$ is concave in $\alpha$ and $\beta.$ Furthermore, for every $v\in \R,$ $\alpha,\beta>0$,
    \begin{multline}\label{empiricalKineticEnergyPolymer}
        \partial_\alpha^+\Lambda_1(v,\alpha,\beta) \le \liminf_{n\to \infty} \mushear_{v,\alpha,\beta}^n(\overline{V_n}(v,\gamma))
        \\ \le \limsup_{n\to \infty}\mushear_{v,\alpha,\beta}^n(\overline{V_n}(v,\gamma)) \le\partial_\alpha^- \Lambda_1(v,\alpha,\beta)
    \end{multline} 
    and
    \begin{multline}\label{empiricalPotentialEnergyPolymer}
        \partial_\beta^+\Lambda_1(v,\alpha,\beta) \le \liminf_{n\to \infty} \mushear_{v,\alpha,\beta}^n(\overline{F_n}(\gamma)) 
        \\ \le \limsup_{n\to \infty}\mushear_{v,\alpha,\beta}^n(\overline{F_n}(\gamma)) \le\partial_\beta^- \Lambda_1(v,\alpha,\beta)
    \end{multline}
    $\Prb$-almost surely.
\end{proposition}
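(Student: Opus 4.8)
The plan is to mirror the architecture of the proof of \Cref{concavityOfShapeFcn}, replacing the minimizer comparisons of the zero temperature case with estimates obtained by exponential tilting of the polymer measure. Throughout, I fix a realization of $F$ and abbreviate $U(\gamma)=\sum_{k=0}^{n-1}V(\Delta_k\gamma+v)$ and $W(\gamma)=\sum_{k=0}^{n-1}F_k(\gamma_k)$, so that $B^n(v,\gamma,\alpha,\beta)=\alpha U(\gamma)+\beta W(\gamma)$ and $\Zshear^n(v,\alpha,\beta)=\int e^{-\alpha U-\beta W}\,\refpm_{0,0}^{0,n}(d\gamma)$.

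First I would establish concavity of $-\tfrac1n\log\Zshear^n(v,\alpha,\beta)$ in $(\alpha,\beta)$, which is equivalent to convexity of $\log\Zshear^n$. For $t\in[0,1]$ and $(\alpha,\beta)=t(\alpha_1,\beta_1)+(1-t)(\alpha_2,\beta_2)$, writing the integrand as $\big(e^{-\alpha_1U-\beta_1W}\big)^t\big(e^{-\alpha_2U-\beta_2W}\big)^{1-t}$ and applying Hölder's inequality with exponents $1/t$ and $1/(1-t)$ gives
\[\Zshear^n(v,\alpha,\beta)\le \Zshear^n(v,\alpha_1,\beta_1)^t\,\Zshear^n(v,\alpha_2,\beta_2)^{1-t},\]
and taking logarithms yields convexity of $\log\Zshear^n$. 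Passing to the limit via \Cref{shapeTheoremPolymer}, exactly as in the limiting step \eqref{convexityInLimit}, then gives concavity of $\Lambda_1$ in $(\alpha,\beta)$.

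The central estimate is the positive temperature analog of the minimizer comparison leading to \eqref{empiricalKineticEnergy}. Fix $v$ and $\beta$ and set $f_n(\alpha)=-\tfrac1n\log\Zshear^n(v,\alpha,\beta)$. The tilting identity
\[\Zshear^n(v,\alpha',\beta)=\Zshear^n(v,\alpha,\beta)\,\mushear^n_{v,\alpha,\beta}\big(e^{-(\alpha'-\alpha)U}\big),\]
combined with Jensen's inequality for the convex function $x\mapsto e^{-(\alpha'-\alpha)x}$, gives $\Zshear^n(v,\alpha',\beta)\ge \Zshear^n(v,\alpha,\beta)\,e^{-(\alpha'-\alpha)\mushear^n_{v,\alpha,\beta}(U)}$ for every $\alpha'>0$. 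Taking logarithms and recalling $\tfrac1n\mushear^n_{v,\alpha,\beta}(U)=\mushear^n_{v,\alpha,\beta}(\overline{V_n}(v,\gamma))$, I obtain the approximate linear domination
\[f_n(\alpha')\le f_n(\alpha)+(\alpha'-\alpha)\,\mushear^n_{v,\alpha,\beta}\big(\overline{V_n}(v,\gamma)\big),\]
valid for every $\alpha'>0$ and every realization of $F$. This is precisely condition \eqref{finiteDerivativeInequality} with $x_0=\alpha$, $g_n=\mushear^n_{v,\alpha,\beta}(\overline{V_n}(v,\gamma))$, and $h\equiv 0$. I would then invoke \Cref{deterministicDifferentiability} on the $\Prb$-probability-one event on which $f_n(x)\to\Lambda_1(v,x,\beta)$ for all $x$ in a fixed countable dense set together with $x=\alpha$ (such an event exists by \Cref{shapeTheoremPolymer}); since $\Lambda_1(v,\cdot,\beta)$ is concave, the conclusion \eqref{concaveImpliesBounds} yields exactly \eqref{empiricalKineticEnergyPolymer}. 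The bounds \eqref{empiricalPotentialEnergyPolymer} follow by the identical argument, tilting in $\beta$ with $W$ in place of $U$.

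The main obstacle I anticipate is purely one of integrability: the Jensen step is meaningful only if $\mushear^n_{v,\alpha,\beta}(U)$ is finite, and \Cref{deterministicDifferentiability} requires the slopes $g_n$ to be real numbers, so I must check that $\mushear^n_{v,\alpha,\beta}(\overline{V_n}(v,\gamma))$ and $\mushear^n_{v,\alpha,\beta}(\overline{F_n}(\gamma))$ are finite. This is settled by the elementary bounds $xe^{-\alpha x}\le C_\varepsilon e^{-(\alpha-\varepsilon)x}$ and $xe^{-\beta x}\le C_\varepsilon e^{-(\beta-\varepsilon)x}$, which hold because $U\ge nM_V$ and $W\ge nM_F$ are bounded below; these dominate the relevant integrals by $\Zshear^n(v,\alpha-\varepsilon,\beta)$ and $\Zshear^n(v,\alpha,\beta-\varepsilon)$, both finite for small $\varepsilon>0$ since $\Zshear^n(v,\cdot,\cdot)<\infty$ on all of $(0,\infty)^2$ under our standing assumptions.
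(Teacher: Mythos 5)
Your proposal is correct and follows essentially the same route as the paper's proof: H\"older's inequality for concavity of $-\tfrac1n\log\Zshear^n$ in $(\alpha,\beta)$, then the exponential tilting identity plus Jensen's inequality to obtain the linear domination condition \eqref{finiteDerivativeInequality} with $h\equiv 0$, and finally \Cref{deterministicDifferentiability} in its concave case \eqref{concaveImpliesBounds} on the full-probability event supplied by \Cref{shapeTheoremPolymer}. Your explicit verification that $\mushear^n_{v,\alpha,\beta}(\overline{V_n})$ and $\mushear^n_{v,\alpha,\beta}(\overline{F_n})$ are finite is a welcome addition that the paper leaves implicit.
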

\begin{proof}
    Let $\alpha_1,\alpha_2,\beta_1,\beta_2> 0$ and $t\in [0,1]$. H\"{o}lder's inequality gives
    \begin{align*}
        \int& \exp\Big(- (t\alpha_1+(1-t)\alpha_2)\sum_{k=0}^{n-1}V(\Delta_k\gamma +v)-(t\beta_1 + (1-t)\beta_2)\sum_{k=0}^{n-1}F_k(\gamma_k) \Big)\dx{\gamma}\\
        & = \int \exp\Big( - t\alpha_1\sum_{k=0}^{n-1}V(\Delta_k\gamma +v)-t\beta_1 \sum_{k=0}^{n-1}F_k(\gamma_k)\Big) \\
        &\qquad\qquad\qquad\times \exp\Big( - (1-t)\alpha_2 \sum_{k=0}^{n-1}V(\Delta_k\gamma +v) -(1-t)\beta_2\sum_{k=0}^{n-1}F_k(\gamma_k) \Big)\dx{\gamma}\\
        & \le \Big(\int \exp\Big(-\alpha_1\sum_{k=0}^{n-1}V(\Delta_k\gamma +v)-\beta_1\sum_{k=0}^{n-1}F_k(\gamma_k)\Big)\dx{\gamma}\Big)\Big)^t \\
        &\qquad\qquad\qquad\times \Big(\int \exp\Big(-\alpha_2\sum_{k=0}^{n-1}V(\Delta_k\gamma +v)-\beta_2\sum_{k=0}^{n-1}F_k(\gamma_k)\Big)\dx{\gamma}\Big)\Big)^{1-t}.
    \end{align*}
    Taking logs of both sides and dividing by $n$ establishes concavity of
    $ -\frac{1}{n}\log \Zshear^n(v,\alpha,\beta)$ in~$\alpha$ and~$\beta$. Taking $n\to\infty$ and applying
    \Cref{shapeTheoremPolymer} and the inequalities  analogous to those in \eqref{convexityInLimit}, we obtain that $\Lambda_1(v,\alpha,\beta)$ is concave in $(\alpha,\beta)$.
    
    Now we establish \eqref{empiricalKineticEnergyPolymer}. For $\beta,\alpha,\alpha'> 0$,
    \begin{align*}
         &\log \Zshear^n(v,\alpha',\beta) &
        \\
        =& \log \int \exp\bigg( - \alpha\sum_{k=0}^{n-1}V(\Delta_k \gamma + v)-\beta \sum_{k=0}^{n-1}F_k(\gamma_k) - (\alpha'-\alpha)\sum_{k=0}^{n-1}V(\Delta_k \gamma + v)\bigg)\dx\gamma\\
        =& \log \Zshear^n(v,\alpha,\beta) + \log \mushear_{v,\alpha,\beta}^n\Big[\exp\Big((\alpha'-\alpha)\sum_{k=0}^{n-1}V(\Delta_k \gamma + v)\Big)\Big]\\
        \ge& \log \Zshear^n(v,\alpha,\beta) + (\alpha'-\alpha)\mushear_{v,\alpha,\beta}^n\Big[\sum_{k=0}^{n-1}V(\Delta_k \gamma + v)\Big].
    \end{align*}
    We are going to apply \Cref{deterministicDifferentiability}, and specifically \eqref{concaveImpliesBounds}, to conclude \eqref{empiricalKineticEnergy}.
    Let~$\mathcal{D}$ be any countable dense subset of $\R.$ Fix some $\alpha,\beta >0.$ \Cref{shapeTheoremPolymer} implies that with $\Prb$-probability one, for all $x\in \mathcal{D}\cup \{\alpha\},$
    \[\lim_{n\to \infty} -\frac{1}{n}\log \Zshear^n(v,x,\beta) = \Lambda_1(v,x,\beta).\]
    Thus, with $\Prb$-probability one, the conditions of \Cref{deterministicDifferentiability} are satisfied with 
    \begin{align*}
        x_0 &= \alpha,\\
        f_n(x) &= -\frac{1}{n}\log \Zshear^n(v,x,\beta),\\
        f(x) &= \Lambda_1(v,x,\beta),\\
        g_n &= \mushear_{v,\alpha,\beta}^n(\overline{V_n}(v,\gamma)),\\
        h(x) &= 0,
    \end{align*}
    so the estimate \eqref{empiricalKineticEnergyPolymer} follows from \eqref{concaveImpliesBounds} of \Cref{deterministicDifferentiability}.
    The estimate \eqref{empiricalPotentialEnergyPolymer} follows similarly.
\end{proof}

Let us introduce
 \[
 N_{v,\infty} =  \limsup_{n\to \infty}\frac{1}{n}\mushear_v^n\bigg(\sum_{k=0}^n \sup_{|r|\le 1}V''(\Delta_k\gamma + v + r)\bigg). 
 \]
\begin{lemma}\label{boundedSecondDerivSumsPolymer}
    For every $v\in \R$,
    \begin{equation}\notag
        N_{v,\infty} < \infty 
    \end{equation}
    with $\Prb$-probability one.
\end{lemma}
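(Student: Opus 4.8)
The plan is to mimic the zero temperature argument from \Cref{boundedSecondDerivSums}, replacing the pointwise minimizer statistics by polymer averages under $\mushear_v^n$. The key pointwise inequality \eqref{upperBoundL} was derived for an arbitrary path $\gamma \in \Gamma^n(0)$, so it continues to hold path-by-path. Concretely, by \Cref{secondDerivReqImprovement} there are $L, C > 0$ such that $\sup_{|r|\le 1} V''(x+v+r) \le C V(x+v)$ whenever $|x| > L$, and with $A := \sup_{|x|\le L, |r|\le 1} V''(x+v+r) < \infty$ and $b := \min(\inf_x V(x), 0)$, we obtain for every path $\gamma$ the bound
\[
\frac{1}{n}\sum_{k=0}^{n-1} \sup_{|r|\le 1} V''(\Delta_k\gamma + v + r) \le A + C\,\overline{V_n}(v,\gamma) - Cb.
\]

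First I would integrate this inequality against the probability measure $\mushear_v^n$. Since the right-hand side is affine in $\overline{V_n}(v,\gamma)$ and $A, C, b$ do not depend on $\gamma$, and since $\mushear_v^n$ is a probability measure, linearity of the expectation yields
\[
\frac{1}{n}\mushear_v^n\!\bigg(\sum_{k=0}^{n-1} \sup_{|r|\le 1} V''(\Delta_k\gamma + v + r)\bigg) \le A + C\,\mushear_v^n\big(\overline{V_n}(v,\gamma)\big) - Cb.
\]
I would then take $\limsup_{n\to\infty}$ of both sides. The left-hand side is exactly $N_{v,\infty}$ (up to the harmless discrepancy between summing to $n-1$ versus $n$, which changes the average by a single term that is negligible after dividing by $n$, or can be absorbed by a trivial bound on the extra summand).

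The main and only real obstacle is controlling $\limsup_{n\to\infty} \mushear_v^n(\overline{V_n}(v,\gamma))$, the annealed average kinetic energy along the polymer. This is precisely where I would invoke the positive temperature concavity result \Cref{concavityOfShapeFcnPolymer}: applying \eqref{empiricalKineticEnergyPolymer} at $(\alpha,\beta) = (1,1)$ gives, with $\Prb$-probability one,
\[
\limsup_{n\to\infty} \mushear_v^n\big(\overline{V_n}(v,\gamma)\big) = \limsup_{n\to\infty} \mushear_{v,1,1}^n\big(\overline{V_n}(v,\gamma)\big) \le \partial_\alpha^- \Lambda_1(v,1,1) < \infty,
\]
the finiteness coming from the fact that $\Lambda_1$ is a finite real-valued concave function of $\alpha$, so its one-sided derivatives are finite at the interior point $\alpha = 1$. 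Combining the three displays gives $N_{v,\infty} \le A + C\,\partial_\alpha^- \Lambda_1(v,1,1) - Cb < \infty$ almost surely, completing the proof. The structure is entirely parallel to \Cref{boundedSecondDerivSums}; the sole modification is that the deterministic pointwise bound is first averaged against the Gibbs measure before passing to the limit, and the upper bound on the empirical kinetic energy is supplied by the positive temperature proposition rather than its zero temperature counterpart.
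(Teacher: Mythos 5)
Your proposal is correct and follows essentially the same route as the paper: integrate the pointwise bound \eqref{upperBoundL} against $\mushear_v^n$, then control $\limsup_{n\to\infty}\mushear_v^n(\overline{V_n}(v,\gamma))$ via \eqref{empiricalKineticEnergyPolymer} of \Cref{concavityOfShapeFcnPolymer} at $(\alpha,\beta)=(1,1)$, with finiteness of $\partial_\alpha^-\Lambda_1(v,1,1)$ coming from concavity of the finite function $\Lambda_1$ in $\alpha$. Your explicit handling of the indexing discrepancy and of the finiteness of the one-sided derivative simply spells out steps the paper leaves implicit.
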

\begin{proof}
    Inequality \eqref{upperBoundL} in the proof of~\Cref{boundedSecondDerivSums} implies that for some $A,C>0$ and some $b\le 0,$
    \begin{equation*}
        \frac{1}{n}\mushear_v^n\bigg(\sum_{k=0}^n \sup_{|r|\le 1}V''(\Delta_k\gamma + v + r)\bigg) \le A + C\mushear_v^n(\overline{V_n}(v,\gamma)) - Cb,
    \end{equation*}
    so our claim follows from  \Cref{concavityOfShapeFcnPolymer} applied to $\tilde \mu^n_{v,1,1}$. 
\end{proof}

With these auxiliary lemmas in hand, we can prove the main differentiability result for directed polymers.
\begin{proof}[Proof of \Cref{thm:differentiabilityPolymer}]
    Let $w,v\in \R$ satisfy $|w-v|\le 1.$ Inequality \eqref{taylorTheorem} along with Jensen's inequality implies 
    \begin{align*}
        &\log \Zshear^n(w)  = \log \int \exp\Big(-\sum_{k=0}^{n-1}\Big[F_k(\gamma_k)+ V(\Delta_k \gamma + w)\Big]\Big)\dx{\gamma}\\
        & \ge \log \int e^{-\sum_{k=0}^{n-1}\Big[F_k(\gamma_k)+ V(\Delta_k \gamma + v) + (w-v)V'(\Delta_k \gamma +v) + \frac{1}{2}(w-v)^2 \sup_{|r|\le 1}V''(\Delta_k \gamma + v + r)\Big]}\dx{\gamma}\\
        & = \log \Zshear^n(v) + \log \frac{1}{\Zshear^n(v)}\int e^{-\sum_{k=0}^{n-1}\Big[F_k(\gamma_k)+ V(\Delta_k \gamma + v) + (w-v)V'(\Delta_k\gamma +v) + \frac{1}{2}(w-v)^2 \sup_{|r|\le 1}V''(\Delta_k \gamma + v + r)\Big]}\dx{\gamma}\\
        & = \log \Zshear^n(v) + \log \mushear_v^n \bigg(\exp\Big(-\sum_{k=0}^{n-1}\Big[(w-v)V'(\Delta_k\gamma +v) +\frac{1}{2}(w-v)^2 \sup_{|r|\le 1}V''(\Delta_k \gamma + v + r) \Big]\Big)\bigg)\\
        & \ge \log \Zshear^n(v) - (w-v) \mushear_v^n\bigg( \sum_{k=0}^{n-1}V'(\Delta_k\gamma +v)\bigg) - \frac{1}{2}(w-v)^2\mushear_v^n\bigg(\sum_{k=0}^{n-1} \sup_{|r|\le 1}V''(\Delta_k\gamma + v + r)\bigg).
    \end{align*}
   \Cref{boundedSecondDerivSumsPolymer} implies that for sufficiently large $n$,
    \begin{equation}\notag
        -\frac{1}{n}\log\Zshear^n(w) \le -\frac{1}{n}\log \Zshear^n(v) + (w-v)\mushear_v^n(\overline{V'_n}(v,\gamma)) +  \frac{1}{2}(w-v)^2 (N_{v,\infty}+1).
    \end{equation}
   As in the zero temperature case, the above implies that $-\frac{1}{n}\log \Zshear^n(\cdot)$ satisfies the approximate linear domination condition stated in \eqref{finiteDerivativeInequality} at $v\in \R$, with 
    \begin{align*}
        x_0 &= v,\\
        f_n(x) &= -\frac{1}{n}\log \Zshear^n(x),\\
        g_n & = \mushear_v^n(\overline{V'_n}(v,\gamma)),\\
        h(x) &= \frac{1}{2}(v-x)^2 (N_{v,\infty}+1).
    \end{align*}
Now our theorem follows from  Lemma~\ref{deterministicDifferentiability}.  To apply  Lemma~\ref{deterministicDifferentiability}, it remains to take $f = \Lambda_1$ (which is convex due to 
 \Cref{shapeTheoremPolymer}), take any countable dense set $\mathcal{D}\subset \R$, 
 and notice (see Remark~\ref{shapeTheoremShear}) that 
 on an event of probability one, \eqref{eq:shapeTheoremBnPolymer} holds
    for all $x\in \mathcal{D}\cup \{v\}$. This completes the proof.
   \end{proof}

\section{The Shape Theorems}\label{shapeTheoremSection}

\subsection{Proof of \Cref{shapeTheorem}}

The proof of \Cref{shapeTheorem} is almost identical to Lemma 4.7 in \cite{kickb:bakhtin2016}. See also Theorem 2.1 in \cite{AuffingerDamronHanson_50Years:MR3729447} and Theorem 2.18 
in~\cite{10.1007/BFb0074919} for examples of similar proofs in the fully discrete setting.
To prove the existence of the limit in \eqref{eq:shapeFunction}, let
us fix $v\in\R$.
For $m,n\in\Z$ with $m<n$, let
\begin{equation}
\label{eq:Amnv}
A^{m,n}_*(v) = \inf \{A^{m,n}(\gamma)\,:\, \gamma\in \Gamma^{m,n}(v)\},
\end{equation}
where $A^{m,n}$ is defined in \eqref{eq:path_energy} and 
\[\Gamma^{m,n}(v) = \{\gamma\in\R^{n-m+1}\,:\,\gamma_m = vm,\,\gamma_n = vn\}.\]
Our conditions on $F$ and $V$ imply that for every
$m,n,v$
there is a path $\gamma^{m,n}(v)$ realizing 
the infimum in
\eqref{eq:Amnv} (see Lemma~\ref{lem:existenceOfMinimizer}), and 
\begin{equation}
\label{eq:lowerb_on_Amnv}
\frac{1}{n-m}A^{m,n}_*(v)\ge M_F+M_V>-\infty,
\end{equation}
where $M_F$ and $M_V$ are lower bounds on $F$ and $V$ introduced in Section~\ref{sec:Setting_MainResults}.

Concatenating minimizers $\gamma^{0,m}(v)$ and $\gamma^{m,m+n}(v)$, we obtain the subadditivity property for $A_*^{0,m}(v)$:
\begin{equation}\label{eq:subadditivity}
    A_*^{0,n+m}(v) \le A_*^{0,m}(v) + A_*^{m,n+m}(v),\quad  m,n\in \N.
\end{equation}
 Consider the map $T_v:\Omega\to \Omega$ defined by 
\[(T_v F)_k = F_{k+1}(x - v).\]
We have the following skew-invariance property of the action under the semigroup generated by $T_v$:
\[A_*^{m,n+m}(v)(F) = A_*^{0,n}(v)(T_v^m F).\]
Our conditions on the environment $\Omega$ (stationarity in space and the i.i.d.\ property in time) imply that the map $T_v$ preserves $\Prb$ and that it is ergodic. 

We have
\begin{equation}\notag
\E\, |A_*^{0,n}(v)| < \infty,
\end{equation}
which follows from~\eqref{eq:lowerb_on_Amnv}, \eqref{eq:subadditivity} and equality
\[
A^{0,1}_* = V(v)+ F_0(0).
\]
Thus,
Kingman's subadditive ergodic theorem implies the existence of 
a deterministic $\Prb$-a.s.\ limit 
$\Lambda_0(v)$ in \Cref{shapeTheorem} and the estimate  $\Lambda_0(v)\ge M_F+M_V>-\infty$.

\medskip 
Let us prove that $\Lambda_0$ is convex.
We need to check that for all $v_1,v_2\in\R$ and $t\in(0,1)$,
\begin{equation}
\label{eq:def_convex}
\Lambda_0(v)\le t \Lambda_0 (v_1)+(1-t)\Lambda_0(v_2),  
\end{equation}
where 
\[
v=tv_1+(1-t)v_2.
\]
First let us prove this for $v_1,v_2\in\Q$ and $t\in(0,1)\cap\Q$.

Let us take a sequence of integers $n_k\uparrow\infty$ such that
$tn_k\in\N$ for all $k$.
Concatenating optimal paths, we obtain
\begin{equation}\notag
A^{n_k}_*(n_kv) \le A_*^{tn_k }(t n_kv_1) 
+ A^{tn_k,n_k}_{tn_kv_1, n_k v}.
\end{equation} 
Dividing by $n_k$ and taking limits in probability as $k\to\infty$, we obtain \eqref{eq:def_convex} for rational values of parameters.
To check it for general values of parameters it suffices to prove that $\Lambda_0$ is continuous. 
Let us fix $v\in\R$ and check continuity of $\Lambda_0$ at $v$.

We will need an arbitrary function $b:(v-1,v+1)\to \Q$
satisfying $\lim_{u\to v}b(u)=1$ and $b(u)-1\ge \sqrt{|u-v|}$.

For any $u\in(v-1,v+1)$, let us consider an integer sequence $n_k\uparrow\infty$ satisfying $bn_k\in\N$ 
and $n_{k+1}>bn_k$
for all $k$.
For brevity, we will write $n=n_k$ and $b=b(u)$ from now on. Concatenating minimizers, we obtain 
\[
A^{0,bn}_{0,ubn}\le A_{0,vn}^n+A^{n,bn}_{vn,ubn}
\]
or
\begin{equation}
\label{eq:lower_continuity_approximation_1}
A_{0,vn}^n\ge A^{0,bn}_{0,ubn} - A^{n,bn}_{vn,ubn}.
\end{equation}
Let us estimate the second term on the right-hand side.

To that end, we consider a straight path $\gamma^{(n)}$ defined by
\[
\gamma^{(n)}_i=vn+\frac{ub-v}{b-1}(i-n),\quad i=n,\ldots,bn,
\]
It satisfies 
\begin{align*}
\Delta_i(\gamma^{(n)})&=\frac{ub-v}{b-1},\quad  i=n,\ldots,bn-1,
\\
\gamma^{(n)}_{n}&=vn,\\
\gamma^{(n)}_{bn}&
=ubn.
\end{align*}
We have
\begin{equation}
\label{eq:lower_continuity_approximation_2}
\frac{1}{n}A^{n,bn}_{vn,ubn}\le\frac{1}{n} A^{n,bn}(\gamma^{(n)}) =(b-1)V\Big(\frac{ub-v}{b-1}\Big)+\frac{1}{n}\Sigma(u,n),
\end{equation}
where
\[
\Sigma(u,n)=\sum_{i=n}^{bn-1}F_i(\gamma^{(n)}_i).
\]
All terms in the definition of $\Sigma(u,n_k)$ are jointly i.i.d.,
over all $k$ due to our choice of $(n_k)_{k\in \N}$. The strong law of large numbers implies then that
$\Sigma(u,n)$ satisfies 
\[
\lim_{k\to\infty}\frac{1}{n}\Sigma(u,n)=(b-1)\E F_0(0)
\]
$\Prb$-almost surely.
The first term on the right-hand side of \eqref{eq:lower_continuity_approximation_2} can be estimated by
\[
(b-1)V\Big(\frac{ub-v}{b-1}\Big)= 
(b-1)V\Big(u+\frac{u-v}{b-1}\Big)\le c (b-1)
\]
for some $c>0$
due to our assumptions on $v,u,b$ and $V$. Plugging these estimates into
\eqref{eq:lower_continuity_approximation_2}, we obtain that 
with probability 1
\begin{equation*}
\limsup_{k\to\infty} \frac{1}{n}A^{n,bn}_{vn,ubn}\le C(b-1),
\end{equation*}
where $C=c+\E F_0(0).$
Thus, dividing~\eqref{eq:lower_continuity_approximation_1} by $n_k$,
and taking $\liminf_{k\to\infty}$, we obtain
\[
\Lambda_0(v)\ge b \Lambda_0(u)-C(b-1). 
\] 
Taking $u\to v$ (this implies $b\to 1$), we obtain
\[
\Lambda_0(v)\ge \limsup_{u\to v} \Lambda_0(u). 
\]
The proof of the matching upper bound 
\[
\Lambda_0(v)\le \liminf_{u\to v} \Lambda_0(u)
\]
is similar. It is based on the estimate
\[
A_{0,vn}^n\le
A^{0,bn}_{0,ubn}+A^{bn,n}_{ubn,vn}
\]
for a $\Q$-valued function $b$ satisfying
$b(u)-1\le-\sqrt{|u-v|}$ and $\lim_{u\to v}b(u)=1$. 
This completes the proof of convexity of $\Lambda_0$.
\epf

\subsection{Proof of \Cref{shapeTheoremPolymer}}
Our proof of the existence of $\Lambda_1$ is very similar to the proof of Lemma 6.2 and 6.3 in \cite{Bakhtin-Li:MR3911894}. See also Theorem~9.1 in \cite{Com17} or Theorem~2.2 in \cite{Rassoul-Agha--Seppalainen:MR3176363} for examples of proofs to the analogous proofs to \Cref{shapeTheoremPolymer} in the fully discrete setting.

Let 
\begin{equation}\notag
    Z_*^{m,n}(x,y) = \inf_{|x_1|,|y_1|<1/2} Z^{m,n}_{x+x_1,y+y_1},
\end{equation}
where $Z^{m,n}_{x,y}$ is defined in~\eqref{eq:p2p_pf}.
Let $Z_*^n(v)=Z_*^{0,n}(0,vn)$.

First we prove a bound on $\E|\log Z_*^{n}(v)|$. 
    Recalling the notation from \eqref{eq:ref-measure}, we obtain
    
    \begin{align} \label{ZStarLowerBound} \notag
        Z_*^n(v)  \ge& \inf_{|x|,|y|< 1/2} \int_{\R\times \prod_{k=1}^{n-1}[vk-1/2,vk+1/2]\times \R}  e^{-
A^{0,n}(\gamma)} \refpm_{x,vn+y}^{0,n}(d\gamma)
\\        & \ge  e^{-\sum_{k=0}^{n-1}F_k^*(vk) - n V^*},
    \end{align}
    where 
    \[V^* = \sup\Big\{V(z)\,:\,|z| \le |v| + 1\Big\},\]
    and $F_k^*$ is defined in \eqref{Fkmax}. Therefore, 
    \[-\log Z_*^{n}(v) \le \sum_{k=0}^{n-1} F_k^*(vk) + n V^*.\]    
    Also,
    \begin{align}\label{partitionFunctionLowerBound}
        \log Z^{0,n}_{x,y} &\le -n M_F + \log \int e^{-\sum_{k=0}^{n-1}V(\Delta_k \gamma)}\refpm_{x,y}^{0,n}(d\gamma) \notag \\
        & \le -n M_F + \log \|\stepdensity\|_{L^\infty(\R)} + (n-1)\log \|\stepdensity\|_{L^1(\R)}.
    \end{align}
    It follows that 
    \begin{align}\notag
        \E|\log Z_*^{n}(v)| &= \E[-\log Z^n_*(v)\1_{Z^n_*(v)<1}] 
        + \E[\log Z^n_*(v)\1_{Z^n_*(v)\ge 1}]\\ \notag 
        &\le n\E|F^*_0(0)| + n |V^*| - nM_F + \log \|\stepdensity\|_{L^\infty(\R)} + (n-1)\log \|\stepdensity\|_{L^1(\R)} \\
        & < \infty.\notag
    \end{align}

    Inequality \eqref{partitionFunctionLowerBound} implies that there is a constant $C>0$ such that for all $n\in \N,$
    \[-\frac{1}{n}\log Z^n_*(v) > -C.\]
    The same argument as in the proof of Lemma 6.1 in \cite{Bakhtin-Li:MR3911894} immediately implies that the sequence $(Z_*^{m,n}(vm,vn))_{m,n\in \N}$ is supermultiplicative: 
    \[Z_*^{0,n+m}(0,v(n+m)) \ge Z_*^{0,m}(0,vm)Z_*^{m,n+m}(vm,v(n+m)).\]
 The estimates obtained above allow us to apply  Kingman's subadditive ergodic theorem and conclude that the sequence $(-\frac{1}{n}\log Z_*^n(v))_{n\in \N}$ converges $\Prb$-almost surely to a limit that we will call $\Lambda_1(v)$.

Convexity of $\Lambda_1$ is established similarly to that of $\Lambda_0$. 
For rational $v_1,v_2,v\in\R$ and $t\in(0,1)$, satisfying
$v=tv_1+(1-t)v_2$, the convexity definition
\begin{equation}
\label{eq:def_convex_1}
\Lambda_1(v)\le t \Lambda_1 (v_1)+(1-t)\Lambda_1(v_2),  
\end{equation}
follows from applying $-\frac{1}{n}\log(\cdot)$ and taking $n$ satisfying 
$nt\in\N$ to $\infty$ in
\[Z_*^{0,n}(0,nv)) \ge Z_*^{0,tn}(0,tnv_1)Z_*^{tn,n}
(nv_1, nv).\]
 For general values of parameters, the estimate \eqref{eq:def_convex_1} follows now from continuity of~$\Lambda_1$ which is also established similarly to that of $\Lambda_0$. 

Theorem~\ref{shapeTheoremPolymer} now immediately follows from 
the following lemma.\epf

\begin{lemma}\label{exponentialApproximationLemma}
    For every $v\in \R,$
    \begin{equation}\label{subexponentialApproximation}
        \lim_{n\to \infty}\frac{1}{n}\Big|\log Z^n(v) - \log Z_*^n(v)\Big| = 0
    \end{equation}
    $\Prb$-almost surely.
\end{lemma}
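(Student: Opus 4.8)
The plan is to start from the trivial bound $Z_*^n(v)\le Z^n(v)$ — the infimum defining $Z_*^n(v)$ includes the base endpoints $(0,vn)$ — so that $\log Z^n(v)-\log Z_*^n(v)\ge 0$ and only the upper bound $\log Z^n(v)-\log Z_*^n(v)\le o(n)$ remains. Writing this difference as $\sup_{|x_1|,|y_1|<1/2}\big[\log Z^{0,n}_{0,vn}-\log Z^{0,n}_{x_1,vn+y_1}\big]$ and inserting the intermediate term $\log Z^{0,n}_{0,vn+y_1}$, I would reduce everything to bounding, uniformly over the unit box and $\Prb$-almost surely, the oscillation of $\log Z^{0,n}_{0,\cdot}$ under a unit shift of a single endpoint. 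The two endpoints are treated identically (for the left endpoint the only extra term is the frozen weight $e^{-F_0(x_1)}$, which changes by $O(1)$ by continuity of $F_0$), so I focus on the right endpoint.

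First I would differentiate the one-step recursion $Z^{0,n}_{0,y}=\int_\R Z^{0,n-1}_{0,z}e^{-V(y-z)-F_{n-1}(z)}\,dz$, in which $y$ enters only through the last increment. Differentiation under the integral is routine (for $y$ in a compact set the $y$-derivative is dominated by $C_n|V'(y-z)|e^{-V(y-z)}$, with $C_n$ the uniform bound from \eqref{partitionFunctionLowerBound}, and this is integrable in $z$ since $|V'|e^{-V}\le Ce^{-V/2}\in L^1(\R)$), and gives $\partial_y\log Z^{0,n}_{0,y}=-\mu^{0,n}_{0,y}\big(V'(\Delta_{n-1}\gamma)\big)$. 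Integrating over $y\in[vn,vn+y_1]$ bounds the oscillation by $\tfrac12\sup_{|y_1|\le1/2}\mu^{0,n}_{0,vn+y_1}\big(|V'(\Delta_{n-1}\gamma)|\big)$. The decisive point is then \eqref{growthConstraintV}: it yields $|V'(x)|\le C(V(x)-M_V)^{\theta}+C'$ with $\theta\in(0,1)$, so Jensen's inequality for the concave map $t\mapsto t^{\theta}$ gives $\mu^{0,n}_{0,y}\big(|V'(\Delta_{n-1}\gamma)|\big)\le C\,\mu^{0,n}_{0,y}\big((V-M_V)(\Delta_{n-1}\gamma)\big)^{\theta}+C'$.

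It then remains only to show the expected last-step energy is $O(n)$; since $(V-M_V)(\Delta_{n-1}\gamma)\le\sum_{k=0}^{n-1}(V-M_V)(\Delta_k\gamma)$, it suffices that $\mu^{0,n}_{0,y}\big(\sum_k V(\Delta_k\gamma)\big)=O(n)$ a.s., uniformly for $y\in[vn-1/2,vn+1/2]$. I would obtain this non-circularly — that is, without the shape theorem, whose proof relies on this lemma — by differentiating in the coupling: with $Z^{0,n}_{0,y}(\alpha)=\int e^{-\alpha\sum_kV(\Delta_k\gamma)-\sum_kF_k(\gamma_k)}\,\refpm^{0,n}_{0,y}(d\gamma)$, convexity of $\alpha\mapsto\log Z^{0,n}_{0,y}(\alpha)$ gives $\mu^{0,n}_{0,y}\big(\sum_kV(\Delta_k\gamma)\big)=-\partial_\alpha\log Z^{0,n}_{0,y}(\alpha)\big|_{\alpha=1}\le 2\big[\log Z^{0,n}_{0,y}(1/2)-\log Z^{0,n}_{0,y}(1)\big]$. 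The first term is bounded above by the free-endpoint estimate of \eqref{partitionFunctionLowerBound} with $V$ replaced by $V/2$ (legitimate because $\stepdensity^{1/2}=e^{-V/2}\in L^1(\R)$ by the at-least-linear growth forced by \eqref{linearGrowthV}), and the second is bounded below via $\log Z^{0,n}_{0,y}(1)\ge\log Z_*^n(v)\ge-\sum_{k=0}^{n-1}F_k^*(vk)-nV^*$ from \eqref{ZStarLowerBound}; together with the strong law $\tfrac1n\sum_{k}F_k^*(vk)\to\E F_0^*(0)<\infty$, this gives the desired $O(n)$ bound. Feeding it back, the oscillation is $O(n^{\theta})=o(n)$, completing the proof. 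The hard part is exactly this last-step control: the energy of a single step is genuinely of order $n$, no smaller than the whole action, so it cannot be discarded; the argument works only because the sub-linear growth of $V'$ against $V$ in \eqref{growthConstraintV} converts this $O(n)$ energy into an $o(n)$ force, and because the energy bound itself is secured through the $\alpha$-convexity route rather than through the (here unavailable) shape function.
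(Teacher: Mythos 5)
Your proof is correct, and it takes a genuinely different route from the paper's. The paper decomposes $Z^n(v)=\int Z^{0,1}_{0,x'}Z^{1,n-1}_{x',y'}Z^{n-1,n}_{y',vn}\,dx'\,dy'$, truncates the first and last increments at radii $r_n^\pm$ chosen so that $V(r_n^\pm)=n^{(1+1/\theta)/2}$, disposes of the tail terms $W^n_{1,\pm},W^n_{2,\pm}$ via Markov's inequality and Borel--Cantelli against the exponential lower bound \eqref{exponentialLowerBoundZStar} (using \Cref{tailDecayV}), and compares the central term $W_3^n$ to $Z^n_{x,vn+y}$ by a pointwise ratio of integrands, which by \eqref{growthConstraintV} costs a factor $\exp(O(n^{(1+\theta)/2}))=e^{o(n)}$. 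You instead extract the exact identity $\partial_y\log Z^{0,n}_{0,y}=-\mu^{0,n}_{0,y}\big(V'(\Delta_{n-1}\gamma)\big)$ from the one-step recursion, convert it via $|V'|\le C(V-M_V)^{\theta}+C'$ and Jensen's inequality into a $\theta$-power of the expected last-step energy, and control the expected total kinetic energy by $O(n)$ through convexity of $\alpha\mapsto\log Z^{0,n}_{0,y}(\alpha)$, squeezed between the free-endpoint bound \eqref{partitionFunctionLowerBound} applied to $V/2$ (legitimate since $e^{-V/2}\in L^1(\R)$ by \eqref{linearGrowthV}) and the lower bound \eqref{ZStarLowerBound} combined with the strong law for $(F_k^*(vk))_k$. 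Both arguments ultimately rest on the same inputs---\eqref{linearGrowthV}, \eqref{growthConstraintV}, \eqref{ZStarLowerBound}, and the SLLN---and your use of them is sound: the domination you cite does justify differentiating under the integral, and your energy bound is genuinely non-circular, never invoking \Cref{shapeTheoremPolymer}. What each route buys: yours eliminates the truncation radii and the Borel--Cantelli step (the only probabilistic input left is the SLLN), yields the sharper oscillation bound $O(n^{\theta})$ in place of the paper's $O(n^{(1+\theta)/2})$, and recycles the $\alpha$-convexity device the paper already uses in \Cref{concavityOfShapeFcnPolymer}, making it stylistically coherent with the rest of the paper; the paper's route, in exchange, never differentiates the partition function, handling the endpoint shift and the normalization in a single pointwise density comparison. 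One point to make explicit in a write-up, which you correctly flag but should not gloss over: the factor $e^{-F_0(x_1)}$ attached to the left endpoint is not differentiable in $x_1$ (only continuity of $F$ is assumed), so it must be factored out of the partition function and its almost surely finite, $n$-independent oscillation over $|x_1|<1/2$ estimated separately, with the derivative argument applied only to the remaining smooth integral.
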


Our proof is similar to the proof of Lemma 6.3 in \cite{Bakhtin-Li:MR3911894}.
\begin{proof}[Proof of \Cref{exponentialApproximationLemma}]
    By definition, $Z^n(v) \ge Z_*^{n}(0,vn).$ Thus, it suffices to prove 
    \[\limsup_{n\to \infty}\sup_{|x|,|y|<1/2} \frac{1}{n}\log \frac{Z^n(v)}{Z_{x,vn+y}^n}  = 0.\]
    Inequality \eqref{ZStarLowerBound} and the integrability of $F_k^*(vk)$ implies that there is a deterministic $q > 0$ such that 
    \begin{equation}\label{exponentialLowerBoundZStar}
        \liminf_{n\to \infty}\frac{Z_*^{n}(0,vn)}{q^n} > 0
    \end{equation}
    $\Prb$-almost surely. 

    Let $r^+ = r_n^+$ and $r^- = r_n^-$ be numbers depending on $n$ that we will specify later and let $I = I_n = [-r^- , r^+]$. We have 
    \begin{align*}
        Z^n(v) &=  \int_{\R\times \R} Z^{0,1}_{0,x'}Z^{1,n-1}_{x',y'}Z^{n-1,n}_{y',vn}dx'dy'\\
        & \le W_{1,-}^n + W_{1,+}^n + W_{2,-}^n + W_{2,+}^n + W_3^n,
    \end{align*}
    where
    \begin{align*}
        W_{1,\pm}^n &= \int_{\pm x'>r^\pm} Z^{0,1}_{0,x'}Z^{1,n-1}_{x',y'}Z^{n-1,n}_{y',vn}dx'dy',\\
        W_{2,\pm}^n &= \int_{\pm(vn-y')>r^\pm} Z^{0,1}_{0,x'}Z^{1,n-1}_{x',y'}Z^{n-1,n}_{y',vn}dx'dy',\\
        W_3^n &= \int_{x',(vn-y')\in I}Z^{0,1}_{0,x'}Z^{1,n-1}_{x',y'}Z^{n-1,n}_{y',vn}dx'dy'.
    \end{align*}    
    We now give conditions under which $W_{1,\pm}^n$ and $W_{2,\pm}^n$ decay super-exponentially in~$n$. We will use the following lemma, whose proof we postpone.
    \begin{lemma}\label{tailDecayV}
        If $V$ satisfies \eqref{linearGrowthV}, then there are constants $C,K>0$ such that for all $x>K$ 
        \begin{equation}\label{eq:tailDecay}
            \int_{y>x}e^{-V(y)}dy \le C e^{-V(x)}\quad \textrm{ and }\quad  \int_{y<-x}e^{-V(y)}dy \le C e^{-V(-x)}.
        \end{equation}
    \end{lemma}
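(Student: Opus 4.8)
The plan is to extract from \eqref{linearGrowthV} a uniform lower bound on $|V'|$ away from the origin, use it to pin down both the \emph{sign} and the \emph{linear growth} of $V$ at each infinity, and then dominate $e^{-V}$ by a genuinely decaying exponential whose tail integral is explicit. The whole argument is an elementary one-dimensional comparison once the monotonicity direction is settled.

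First I would fix $\delta>0$ and $K_0>0$ so that $|V'(x)|\ge\delta$ for all $|x|>K_0$; this is precisely the content of \eqref{linearGrowthV}. Since $V\in C^2(\R)$, its derivative $V'$ is continuous, and on the connected interval $(K_0,\infty)$ it cannot change sign without passing through $0$, which is ruled out by $|V'|\ge\delta$. Hence $V'$ has a constant sign on $(K_0,\infty)$, and likewise on $(-\infty,-K_0)$. The coercivity hypothesis \eqref{kineticEnergyGrowthAtInfinity} then fixes that sign: were $V'$ negative on $(K_0,\infty)$, the bound $V'\le-\delta$ would force $V(x)\to-\infty$, contradicting $V(x)\to+\infty$. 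Thus $V'\ge\delta$ on $(K_0,\infty)$ and, symmetrically, $V'\le-\delta$ on $(-\infty,-K_0)$.

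With the sign in hand, the estimates are immediate. For $x>K_0$ and any $y>x$, integrating $V'\ge\delta$ gives $V(y)-V(x)=\int_x^y V'(t)\,dt\ge\delta(y-x)$, so $e^{-V(y)}\le e^{-V(x)}e^{-\delta(y-x)}$; integrating over $y\in(x,\infty)$ yields
\[
\int_{y>x}e^{-V(y)}\,dy\le e^{-V(x)}\int_x^\infty e^{-\delta(y-x)}\,dy=\frac{1}{\delta}\,e^{-V(x)},
\]
which is the first inequality in \eqref{eq:tailDecay} with $C=1/\delta$ and $K=K_0$. The left tail is handled identically: for $x>K_0$ and $y<-x$ the bound $V'\le-\delta$ on $(-\infty,-K_0)$ gives $V(y)-V(-x)\ge\delta(-x-y)$, and integrating $e^{-V(y)}\le e^{-V(-x)}e^{-\delta(-x-y)}$ over $y\in(-\infty,-x)$ again produces the factor $1/\delta$, so the same $C$ works for both inequalities.

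I expect no real obstacle here; the single point demanding care is determining the sign of $V'$ at each infinity, since \eqref{linearGrowthV} on its own controls only $|V'|$. This is exactly where continuity of $V'$ (forcing monotonicity of $V$ near each infinity) combined with coercivity \eqref{kineticEnergyGrowthAtInfinity} is indispensable: without it, $V$ could decrease to $-\infty$ and the integrals on the left of \eqref{eq:tailDecay} would diverge, making the statement false. Once the correct direction of monotonicity is established, the remaining exponential comparison is routine.
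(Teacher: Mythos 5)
Your proof is correct and follows essentially the same route as the paper's: both integrate the uniform lower bound on $|V'|$ from \eqref{linearGrowthV} to dominate $e^{-V(y)}$ by $e^{-V(x)}e^{-\delta(y-x)}$ and then compute the explicit tail integral, yielding the constant $C=1/\delta$. The only difference is that you spell out the sign-determination step (continuity of $V'$ plus coercivity \eqref{kineticEnergyGrowthAtInfinity} force $V'\ge\delta$ near $+\infty$ and $V'\le-\delta$ near $-\infty$), which the paper's one-line proof takes for granted when it bounds $\int_x^y V'(z)\,dz\ge C(y-x)$ from the hypothesis $|V'|\ge C$ alone.
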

    Let $p^{\ast k}$ be the $k$-fold convolution of $p = e^{-V}$.  \Cref{tailDecayV} implies that if $r^+>K$, then 
    \begin{align*}
        \E W_{1,+}^n &= (\E e^{-F_0(0)})^n \int_{x' > r^+}p(x') p^{\ast (n-1)}(vn-x')dx'\\
        & \le (\E e^{-F_0(0)})^n \|p^{\ast (n-1)}\|_{L^\infty(\R)}\int_{x'>r^+}p(x')dx'\\
        & \le C (\E e^{-F_0(0)})^n \|p\|_{L^\infty(\R)}\|p\|_{L^1(\R)}^{n-2} p(r^+).
    \end{align*}
   We recall that $q$ is chosen to ensure \eqref{exponentialLowerBoundZStar}. There are constants  $C',D > 0$ such that
    \begin{equation}\label{W1largeBound}
        \Prb\big\{|W_{1,+}^n| > (q/2)^n\big\} \le (q/2)^{-n}\E W_{1,+}^n \le C' e^{Dn} p(r_n^+).
    \end{equation}
    If $r_n^\pm$ are chosen to guarantee
    \begin{equation}\label{conditionOnR1}
        p(r_n^\pm) = e^{-V(r_n^\pm)} = O(e^{-R n}),\quad n\to\infty,\quad \forall R>0,
    \end{equation}
 then \eqref{W1largeBound} is summable and so the Borel--Cantelli Lemma implies that 
    \[\limsup_{n\to \infty}\frac{W_{1,+}^n}{Z_*^n(0,vn)} = \limsup_{n\to \infty}\frac{W_{1,+}^n}{q^n}\Big(\frac{Z_*^n(0,vn)}{q^n}\Big)^{-1} = 0\]
    $\Prb$-almost surely. Similar analysis shows that \eqref{conditionOnR1} implies 
    \[\limsup_{n\to \infty}\frac{W^n_{1,-}}{Z_*^n(0,vn)} = \limsup_{n\to \infty}\frac{W_{2,\pm}^n}{Z_*^n(0,vn)} = 0\]
    $\Prb$-almost surely.

    Now we consider $W_3^n$. Simple manipulation shows that 
    \begin{align}\label{W3ratioupperBound}
        \sup_{|x|,|y|< 1/2}\frac{W_3^n}{Z^{n}_{x,vn+y}} &\le \sup_{\stackrel{|x|,|y|< 1/2}{x',(vn-y') \in I}}\frac{p(x')p(vn-y')e^{-F_0(0)}}{p(x'-x)p(vn+y-y') e^{-F_0(x)}}\notag\\
        & = e^{-F_0(0) + \sup_{|x|< 1/2} F_0(x)}\exp\Big(2 \sup_{|x|< 1/2,x'\in I} [V(x'-x) - V(x')]\Big)\notag\\
        & \le e^{-F_0(0) + \sup_{|x|< 1/2}F_0(x)}\exp\Big(2\sup_{x\in I+(-1/2,1/2)} |V'(x)|\Big).
    \end{align}
    By \eqref{linearGrowthV}, \eqref{growthConstraintV}, and \eqref{derivativeRatio}, there are $c,C_1,K_1 > 0,$ $\theta\in(0,1)$,  such that if $|x|>K$ then $|V'(x) |\ge c$, $|V'(x)| \le C_1 |V(x)|^{\theta}$, and $|V(x\pm 1/2)| \le C_1 |V(x)|$. Denoting 
  $C_2= \sup_{|x|\le K}|V'(x)|$ and using monotonicity of $V$ on each component of  $\{x:|x|>K\}$, we obtain  
    \begin{align*}
        \sup_{x\in I+(-1/2,1/2)} |V'(x)| &\le \sup_{|x|\le K}|V'(x)| + \sup_{\stackrel{x\in I+(-1/2,1/2)}{|x|>K}}|V(x)|^{\theta}\notag\\
        & = C_2 + C_1|V(r^+ + 1/2)|^{\theta} + C_1 |V(- r^- - 1/2)|^{\theta}\notag\\
        & = C_2 + C_1^2 |V(r^+)|^{\theta} + C_1^2 |V(-r^-)|^{\theta}.
    \end{align*}
    Using this estimate in \eqref{W3ratioupperBound} we obtain that there is a random variable $C_3$  (that does not depend on $n$) and a constant $C_4$  such that
    \begin{equation}\label{W3ratioupperBound2}
        \frac{W_3^n}{Z_*^n(0,vn)} \le C_3 e^{C_4 \max(|V(r^+)|,|V(-r^-)|)^{\theta}}.
    \end{equation}
    If we choose $r^+ = r_n^+$ and $r^- = r_n^-$ such that 
    \begin{equation}\label{conditionOnR2}
        \max(|V(r_n^+)|,|V(-r_n^-)|) = o(n^{1/{\theta}})
    \end{equation}
    then \eqref{W3ratioupperBound2} will imply 
    \[\limsup_{n\to \infty}\frac{1}{n}\log \frac{W_3^n}{Z_*^n(0,vn)} = 0.\]

    This analysis shows that if we can find sequences $(r_n^+)_{n\in \N}$ and $(r_n^-)_{n\in\N}$ satisfying~\eqref{conditionOnR1} and \eqref{conditionOnR2}, then \eqref{subexponentialApproximation} will follow. Due to \eqref{kineticEnergyGrowthAtInfinity},  we can choose sequences $r_n^+\to \infty$ and $r_n^-\to \infty$ such that for sufficiently large $n\in\N,$
    \[V(r_n^+) = n^{\frac{1+1/\theta}{2}},\quad V(-r_n^-) = n^{\frac{1+1/\theta}{2}},\]
    so that \eqref{conditionOnR1} and \eqref{conditionOnR2} hold. This completes the proof of the lemma.
\end{proof}

\begin{proof}[Proof of \Cref{tailDecayV}]
    Let $C,K>0$ be such that $|V'(x)| \ge C$ for $|x|>K$. If $x>K$, then
    \begin{align*}
        \int_{y > x} e^{V(x)-V(y)}dy & = \int_{y > x} e^{- \int_x^yV'(z)dz}dy
         \le \int_{y > x} e^{-C(y-x)}dy \le \frac{1}{C}.
    \end{align*}
    Multiplying both sides by $e^{-V(x)}$ proves the right tail claim of \eqref{eq:tailDecay}, and the left tail claim follows similarly.
\end{proof}

\section{Proof of an Auxiliary Lemma}\label{auxiliaryProofs}
\begin{proof}[Proof of \Cref{lem:existenceOfMinimizer}]
    To prove that there is a minimizer to \eqref{eq:AF_def},
     it suffices, due to continuity of $A^n$, to check that
    \begin{equation*}
        \lim_{\substack{\max_k |\gamma_k|\to \infty\\ \gamma\in \Gamma^n(v)}}|A^n(\gamma)| = \infty.
    \end{equation*}
    This relation is a consequence of $\max_{k}|\Delta_k \gamma |\to\infty $, condition  \eqref{kineticEnergyGrowthAtInfinity}, and 
    the estimate
    \begin{equation*}
        \sum_{k=0}^{n-1}V(\Delta_k \gamma) + \sum_{k=0}^{n-1}F_k(\gamma_k) \ge \max_{k=0,\dots,n}V(\Delta_k \gamma) + (n-1)M_V + n M_F.
    \end{equation*}

    Now we prove existence a measurable selection of minimizer to \eqref{eq:AF_def}. Consider the set-valued function 
    \begin{align*}
        \psi:\Omega&\to \mathcal{P}(\Gamma^n(v))\\ 
        F&\mapsto \{\gamma\in \Gamma^n(v)\,:\, A^n(\gamma) = A_*^n(v)\},
    \end{align*}
    where $\mathcal{P}(\Gamma^n(v))$ is the power set of $\Gamma^n(v).$ 
    We identify $\Gamma^n(v)$ with $\R^{n-1}$ (the endpoints of paths are fixed) and equip it with Euclidean norm $\|\cdot\|_2$.

    We are going to apply the Kuratowski--Ryll-Nardzewski Selection Theorem (see Theorem 18.13 in \cite{Aliprantis:MR2378491}), which will allow 
    us to conclude that there is a measurable map $\gamma_A^n(v):\Omega\to \Gamma^n(v)$ satisfying
    \begin{equation}\label{gammaAnProperty}
        A^n(\gamma_A^n(v)) = A_*^n(v).
    \end{equation}
The conditions of that theorem requiring that for every $F\in \Omega,$ $\psi(F)$ is non-empty and closed hold true since 
$A^n(\gamma)$ is continuous in $\gamma$ and 
minimizers exist. To ensure the remaining condition of weak measurability of  $\psi$, we must  check that
 for every open set $U\subset \Gamma^n(v)$ the set 
    \[U_{\psi^{-1}} := \{F\in \Omega\,:\, \psi(F)\cap U \neq \emptyset\}\]
    is measurable in $\Omega.$ 
    
    Let $\mathcal{D}$ be any countable dense subset of $\Gamma^n(v).$ For a real number $r > 0$ let $U^r$ be the set of points $\gamma\in U$ such that if $\|\gamma-\gamma'\|_2 < r$, then $\gamma'\in U.$ We have 
    \begin{align*}
        U_{\psi^{-1}}&= \{F\in \Omega\,:\, \exists \gamma \in U,\,\, A^n(\gamma) = A_*^n(v)\}\\
        & = \{F\in \Omega\,:\, \exists r > 0\,\textrm{ s.t. }\exists (\gamma(k))_{k\in \N} \subset U^r\cap\mathcal{D},\,\, \lim_{k\to \infty}A^n(\gamma(k)) = A_*^n(v)\}\\
        & = \bigcup_{m\in \N}\bigcap_{k\in \N}\bigcup_{\gamma\in U^{\frac{1}{m}}\cap \mathcal{D}}\bigcap_{\gamma'\in \Gamma^n(v)\cap \mathcal{D}}
        C(k,\gamma,\gamma'),
    \end{align*}
    where
    \[
    C(k,\gamma,\gamma')
        =\Big\{F\in \Omega\,:\, A^n(\gamma) \le A^n(\gamma') + \frac{1}{k}\Big\}.
    \]
 For a fixed $\gamma\in\Gamma^n(v)$, $A^n(\gamma)$ is continuous as a function from $\Omega$ to $\R.$ Thus, for every $k,\gamma$, and $\gamma',$ 
 $C(k,\gamma,\gamma')$ is measurable, and so $U_{\psi^{-1}}$ is also measurable in $\Omega.$
  
Now 
the Kuratowski--Ryll-Nardzewski Selection Theorem implies the existence of a measurable map $\gamma_A^n(v):\Omega\to \Gamma^n(v)$ satisfying
~\eqref{gammaAnProperty}.

    Recall the map $\Xi_v^*$ defined in \eqref{FShearInvariant}. We have by definition 
    \begin{equation}\label{BnTransformation}
        B_*^n(v)(\Xi_v^*F) = A_*^n(v)( F).
    \end{equation}
    Define
    \[\gamma_B^n(v)(F) := \Xi_{-v}\gamma_A^n(v)(\Xi_{-v}^* F).\]
    Equalities \eqref{BnTransformation} and \eqref{gammaAnProperty} imply that 
    \[B^n(v,\gamma_B^n(v)) = B_*^n(v).\]
    Equality \eqref{shearedEnvironmentEquality} follows by choice of $\gamma_B^n(v)$ and the fact that $\Xi_v^*$ is measure preserving.
\end{proof}

\bibliographystyle{alpha} 
\bibliography{Burgers,polymer}

\end{document}